\numberwithin{equation}{section}
\newtheorem{theorem}[equation]{Theorem}
\newtheorem*{theorem*}{Theorem} \newtheorem{lemma}[equation]{Lemma}
\newtheorem*{conjecture*}{Mamma Conjecture}
\newtheorem*{conjecture1*}{Mamma Conjecture (revisited)}
\newtheorem{proposition}[equation]{Proposition}
\newtheorem{corollary}[equation]{Corollary}
\newtheorem*{corollary*}{Corollary}
\theoremstyle{remark}
\newtheorem{definition}[equation]{Definition}
\newtheorem{example}[equation]{Example}
\newtheorem{notation}[equation]{Notation}
\theoremstyle{remark}
\newtheorem{remark}[equation]{Remark}
\newcommand{\cA}{{\mathcal A}}
\newcommand{\cB}{{\mathcal B}}
\newcommand{\cC}{{\mathcal C}}
\newcommand{\cD}{{\mathcal D}}
\newcommand{\cT}{{\mathcal T}}
\newcommand{\cX}{{\mathcal X}}
\newcommand{\Spt}{\mathrm{Spt}}
\newcommand{\bbA}{\mathbb{A}}
\newcommand{\bbC}{\mathbb{C}}
\newcommand{\bbN}{\mathbb{N}}
\newcommand{\bbS}{\mathbb{S}}
\newcommand{\bbZ}{\mathbb{Z}}
\DeclareMathOperator{\id}{id}
\DeclareMathOperator{\Mot}{Mot}
\newcommand{\dgcat}{\mathrm{dgcat}} 
\newcommand{\loc}{\mathrm{loc}}
\newcommand{\bbK}{I\mspace{-6.mu}K}
\newcommand{\perf}{\mathrm{perf}}
\newcommand{\dg}{\mathrm{dg}}
\newcommand{\Hom}{\mathrm{Hom}}
\newcommand{\rep}{\mathrm{rep}}
\newcommand{\Hmo}{\mathrm{Hmo}}
\newcommand{\op}{\mathrm{op}}
\newcommand{\too}{\longrightarrow}
\newcommand{\add}{\mathrm{add}}
\newcommand{\ie}{\textsl{i.e.}\ }
\let\oldmarginpar\marginpar
\def\marginpar#1{\oldmarginpar{\tiny #1}}
\begin{document}

\title[Invariants of corner skew Laurent polynomial algebras]{$\bbA^1$-homotopy invariants of \\corner skew Laurent polynomial algebras}
\author{Gon{\c c}alo~Tabuada}

\address{Gon{\c c}alo Tabuada, Department of Mathematics, MIT, Cambridge, MA 02139, USA}
\email{tabuada@math.mit.edu}
\urladdr{http://math.mit.edu/~tabuada}
\thanks{The author was partially supported by a NSF CAREER Award}

\subjclass[2010]{14A22, 16S36, 19D50, 19D55}
\date{\today}

\keywords{Corner skew Laurent polynomial algebra, Leavitt path algebra, algebraic $K$-theory, noncommutative mixed motives, noncommutative algebraic geometry}
\abstract{In this note we prove some structural properties of all the $\bbA^1$-homotopy invariants of corner skew Laurent polynomial algebras. As an application, we compute the mod-$l$ algebraic $K$-theory of Leavitt path algebras using solely the kernel/cokernel of the incidence matrix. This leads naturally to some vanishing and divisibility properties of the $K$-theory of these algebras.}
}

\maketitle
\vskip-\baselineskip
\vskip-\baselineskip


\section{Corner skew Laurent polynomial algebras}
Let $k$ be a field, $A$ a unital $k$-algebra, $e$ an idempotent of $A$, and $\phi\colon A\stackrel{\sim}{\to}~eAe$ a ``corner'' isomorphism. Following Ara-Barroso-Goodearl-Pardo \cite[\S2]{Fractional}, the associated {\em corner skew Laurent polynomial algebra $A[t_+,t_-;\phi]$} is defined as follows: the elements are formal expressions $t^m_- a_{-m} + \cdots + t_- a_{-1} + a_0 + a_1 t_+ \cdots + a_n t_+^n$
with $a_{-i} \in \phi^i(1)A$ and $a_i \in A\phi^i(1)$ for every $i \geq 0$; the addition is defined componentwise; the multiplication is determined by the distributive law and by the relations $t_-t_+=1$, $t_+t_-=e$, $at_-=t_-\phi(a)$ for every $a \in A$, and $t_+a = \phi(a)t_+$ for every $a \in A$. Note that $A[t_+,t_-;\phi]$ admits a canonical $\bbZ$-grading with $\mathrm{deg}(t_\pm)=\pm 1$. 

As proved in \cite[Lem.~2.4]{Fractional}, the corner skew Laurent polynomial algebras can be characterized as those $\bbZ$-graded algebras $C=\bigoplus_{n \in \bbZ}C_n$ containing elements $t_+\in C_1$ and $t_-\in C_{-1}$ such that $t_-t_+=1$. Concretely, we have $C=A[t_+,t_-;\phi]$ with $A:=C_0$, $e:=t_+t_-$, and $\phi\colon C_0 \to t_+ t_- C_0 t_+ t_-$ given by $c_0\mapsto t_+ c_0 t_-$.
\begin{example}[Skew Laurent polynomial algebras]
When $e=1$, $A[t_+,t_-;\phi]$ reduces to the classical skew Laurent polynomial algebra $A \rtimes_\phi \bbZ$. In the particular case where $\phi$ is the identity, $A\rtimes_\phi \bbZ$ reduces furthermore to $A[t,t^{-1}]$.
\end{example}
\begin{example}[Jacobson algebras]
Following \cite{Jacobson}, the {\em Jacobson algebra $J_n$, $n\geq 0$,} is the $k$-algebra generated by elements $x_0, \ldots, x_n, y_0, \ldots, y_n$ subject to the relations $y_i x_j = \delta_{ij}$. Note that the canonical $\bbZ$-grading, with $\mathrm{deg}(x_i)=1$ and $\mathrm{deg}(y_i)=-1$, makes $J_n$ into a corner skew Laurent polynomial algebra. The algebras $J_n$ are also usually called Cohn algebras (see \cite{AK}), and $J_0$ the (algebraic) Toeplitz algebra.
\end{example}
\begin{example}[Leavitt algebras]
Following \cite{Leavitt}, the {\em Leavitt algebra $L_n$, $n\geq 0$,} is the $k$-algebra generated by elements $x_0, \ldots, x_n, y_0, \ldots, y_n$ subject to the relations $y_i x_j = \delta_{ij}$ and $\sum^n_{i=0} x_i y_i =1$. Note the canonical $\bbZ$-grading, with $\mathrm{deg}(x_i)=1$ and $\mathrm{deg}(y_i)=-1$, makes $L_n$ into a corner skew Laurent polynomial algebra. Note also that  $L_0\simeq k[t,t^{-1}]$. In the remaining case $n\geq 1$, $L_n$ is the universal example of a $k$-algebra of {\em module type $(1,n+1)$}, \ie $L_n\simeq L_n^{\oplus (n+1)}$ as right $L_n$-modules.
\end{example}
\begin{example}[Leavitt path algebras]\label{ex:Leavitt-path}
Let $Q=(Q_0,Q_1, s,r)$ be a finite quiver; $Q_0$ and $Q_1$ stand for the sets of vertex and arrows, respectively, and $s$ and $r$ for the source and target maps, respectively. We assume that $Q$ has no {\em sources}, \ie vertices $i\in Q_0$ such that $\{\alpha\,|\, r(\alpha)=i\} =\emptyset$. Consider the double quiver $\overline{Q}=(Q_0,Q_1\cup Q_1^\ast, s,r)$ obtained from $Q$ by adding an arrow $\alpha^\ast$, in the converse direction, for each arrow $\alpha \in Q_1$. Following Abrams-Pino \cite{AP} and Ara-Moreno-Pardo \cite{Pardo}, the {\em Leavitt path algebra $L_Q$ of $Q$} is the quotient of the quiver algebra $k\overline{Q}$ (which is generated by elements $\alpha \in Q_1\cup Q_1^\ast$ and $e_i$ with $i \in Q_0$) by the Cuntz-Krieger's relations: $\alpha^\ast \beta = \delta_{\alpha \beta} e_{r(\alpha)}$ for every $\alpha, \beta \in Q_1$; $\sum_{\{\alpha \in Q_1|s(\alpha)=i\}} \alpha \alpha^\ast =e_i$ for every non-sink $i \in Q_0$.
Note that $L_Q$ admits a canonical $\bbZ$-grading with $\mathrm{deg}(\alpha)=1$ and $\mathrm{deg}(\alpha^\ast)=-1$. For every vertex $i \in Q_0$ choose an arrow $\alpha_i$ such that $r(\alpha_i)=i$ and consider the associated elements $t_+:=\sum_{i \in Q_0}\alpha_i$ and $t_-:=t_+^\ast$. Since $\mathrm{deg}(t_\pm)=\pm1$ and $t_-t_+=1$, $L_Q$ is also an example of a corner skew Laurent polynomial algebra. 

In the particular case where $Q$ is the quiver with one vertex and $n+1$ arrows,~$L_Q$ is isomorphic to $L_n$. Similarly, when $Q$ is the quiver with two vertices $\{1,2\}$ and $2(n+1)$ arrows ($n+1$ from $1$ to $1$ and $n+1$ from $1$ to $2$), we have $L_Q\simeq J_n$.
\end{example}
\section{$\bbA^1$-homotopy invariants}
A {\em dg category $\cA$}, over a base field $k$, is a category enriched over cochain complexes of $k$-vector spaces; see \S\ref{sub:dg}. Every (dg) $k$-algebra $A$ gives rise to a dg category with a single object. Another source of examples is provided by schemes since the category of perfect complexes $\perf(X)$ of every quasi-compact quasi-separated $k$-scheme $X$ admits a canonical dg enhancement $\perf_\dg(X)$. In what follows, we denote by $\dgcat(k)$ the category of (small) dg categories and dg functors.

A functor $E\colon \dgcat(k) \to \cT$, with values in a triangulated category, is called a {\em localizing invariant} if it satisfies the following three conditions:
\begin{itemize}
\item it inverts the derived Morita equivalences (see \S\ref{sub:dg});
\item it sends\footnote{\label{note1}In a way which is functorial for strict morphisms of sequential colimits and strict morphisms of short exact sequences of dg categories.} sequential (homotopy) colimits to sequential homotopy colimits;
\item it sends\footref{note1} short exact sequences of dg categories, in the sense of Drinfeld \cite{Drinfeld} and Keller \cite{Keller} (see \cite[\S4.6]{ICM-Keller}), to distinguished triangles
\begin{eqnarray*}
0 \too \cA \too \cB \too \cC \too 0 &\mapsto & E(\cA) \too E(\cB) \too E(\cC) \stackrel{\partial}{\too} \Sigma E(\cA)\,.
\end{eqnarray*}
\end{itemize}
When $E$ inverts moreover the dg functors $\cA \to \cA[t]$, where $\cA[t]$ stands for the tensor product $\cA\otimes k[t]$, we call it an {\em $\bbA^1$-homotopy invariant}. Examples of localizing invariants include Hochschild homology $HH$, topological Hochschild homology $THH$, cyclic homology $HC$, the mixed complex $C$, nonconnective algebraic $K$-theory $\bbK$, mod-$l^\nu$ nonconnective algebraic $K$-theory $\bbK(-;\bbZ/l^\nu)$ (with $l^\nu$ a prime power) and its variant $\bbK(-)\otimes \bbZ[1/l]$, homotopy $K$-theory $KH$, mod-$l^\nu$ homotopy $K$-theory $KH(-;\bbZ/l^\nu)$, and \'etale $K$-theory $K^{\mathrm{et}}(-;\bbZ/l^\nu)$; see \cite[\S8.2]{book}. Among those, $\bbK(-;\bbZ/l^\nu)$ (with $l \nmid \mathrm{char}(k)$), $\bbK(-)\otimes \bbZ[1/l]$ (with $l=\mathrm{char}(k)$), $KH$, $KH(-;\bbZ/l^\nu)$, and $K^{\mathrm{et}}(-;\bbZ/l^\nu)$ are $\bbA^1$-homotopy invariants; see \cite[\S8.5]{book}. When applied to $A$, resp. to $\perf_\dg(X)$, the preceding invariants reduce to the corresponding invariants of the (dg) $k$-algebra $A$, resp. of the $k$-scheme~$X$.
\begin{example}[Noncommutative mixed motives]\label{ex:Mot}
  Let $\Mot_\loc(k)$, resp. $\Mot_\loc^{\bbA^1}(k)$, be the category of noncommutative mixed motives constructed in \cite[\S 8.2]{book}, resp. in
  \cite[\S 8.5.1]{book}. By
  construction, this triangulated category comes equipped with a localizing invariant $\mathrm{U}_\loc\colon \dgcat(k) \to \Mot_\loc(k)$, resp. with an $\bbA^1$-homotopy invariant $\mathrm{U}_\loc^{\bbA^1}\colon \dgcat(k) \to \Mot_\loc^{\bbA^1}(k)$, which is initial among all localizing invariants, resp. among all $\bbA^1$-homotopy invariants; consult \cite[\S8-9]{book} for further details.
 \end{example}
\section{Statement of results and applications}\label{sec:results}
Let $A$ be a unital $k$-algebra, $e$ an idempotent of $A$, and $\phi\colon A \stackrel{\sim}{\to} eAe$ a ``corner'' isomorphism. Out of this data, we can construct the associated corner skew Laurent polynomial algebra $A[t_+,t_-;\phi]$. In the same vein, let us write $A[t_+;\phi]$ for the algebra defined by the formal expressions $a_0 + a_1 t_+ + a_2 t^2_+ + \cdots + a_n t^n_+$ and by the relations $t_+a=\phi(a)t_+$. Consider the $A\text{-}A$-bimodule ${}_\phi A$ associated to $\phi$ (see Notation \ref{not:unital}) and the {\em square-zero extension} $A[\epsilon]:=A\ltimes \!({}_\phi A[1])$ of $A$ by the suspension ${}_\phi A[1]$ of ${}_\phi A$. Concretely, $A[\epsilon]= A \oplus (eA)[1]$ with multiplication law $(a, eb)\cdot (a', eb'):= (aa', e(ab'+ba'))$. Consider also the dg $A[\epsilon]\text{-}A[t_+;\phi]$-bimodule $\mathrm{B}$ whose restriction to $A[t_+;\phi]$ is the projective resolution $t_+\cdot - \colon A[t_+;\phi] \to eA[t_+;\phi]$ of the trivial right $A[t_+;\phi]$-module $eA$. The left dg action is given by the canonical identification of $A[\epsilon]$ with the Ext-algebra $Ext^\ast_{A[t_+;\phi]}(eA)$. As explained in \S\ref{sub:dg}, the dg bimodule $\mathrm{B}$ corresponds to a morphism from $A[\epsilon]$ to $A[t_+;\phi]$ in the localization of $\dgcat(k)$ with respect to derived Morita equivalences. Therefore, given any functor $E$ which inverts derived Morita equivalences, we obtain an induced morphism $E(\mathrm{B})\colon E(A[\epsilon]) \to E(A[t_+;\phi])$.
\begin{theorem}\label{thm:main1}
For every localizing invariant $E$, we have the homotopy colimit
\begin{equation}\label{eq:sequence}
\mathrm{hocofib}\, E(\mathrm{B}) \too \mathrm{hocofib}\,E(\mathrm{B}) \too \cdots \too E(A[t_+,t_-;\phi])\,,
\end{equation}
where the transition morphism(s) is induced by the corner isomorphism $\phi$.

When $E$ is an $\bbA^1$-homotopy invariant, $E(\mathrm{B})$ reduces to $\id- E({}_\phi A)\colon E(A) \to~\!\!E(A)$ and the composition \eqref{eq:sequence} is an isomorphism. Consequently, we obtain a triangle
\begin{equation}\label{eq:triangle-searched}
E(A) \stackrel{\id- E({}_\phi A)}{\too} E(A) \too E(A[t_+,t_-;\phi])\stackrel{\partial}{\too} \Sigma E(A)\,.
\end{equation}
\end{theorem}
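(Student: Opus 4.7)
The plan is to derive the sequence \eqref{eq:sequence} from an excision short exact sequence of dg categories, and then identify the sequential homotopy colimit with $E(A[t_+,t_-;\phi])$ by exhibiting $\perf_\dg(A[t_+,t_-;\phi])$ as a stage-wise Drinfeld-Keller localization of $\perf_\dg(A[t_+;\phi])$ along the $\phi$-translates of $eA$. By construction, the dg bimodule $\mathrm{B}$ is a projective resolution of the trivial right $A[t_+;\phi]$-module $eA$, and $A[\epsilon]$ is identified with the derived endomorphism dg algebra $\REnd_{A[t_+;\phi]}(eA)$. Up to derived Morita equivalence, $\mathrm{B}$ therefore encodes the inclusion of the thick subcategory $\langle eA\rangle \subset \perf_\dg(A[t_+;\phi])$. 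Drinfeld-Keller's short exact sequences of dg categories then produce $0 \to A[\epsilon] \to A[t_+;\phi] \to Q_0 \to 0$ with $Q_0$ the corresponding Verdier quotient, and applying $E$ yields $\mathrm{hocofib}\, E(\mathrm{B}) \simeq E(Q_0)$.

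Next, I would realize $A[t_+,t_-;\phi]$ as the sequential homotopy colimit of $\phi$-translates of this quotient. The heuristic is that $A[t_+,t_-;\phi]$ results from $A[t_+;\phi]$ by adjoining a left inverse $t_-$ to $t_+$, which at the level of derived categories amounts to localizing away the thick subcategory generated by $eA$ together with all of its $\phi^n$-translates. Performing this localization one translate at a time yields a sequential diagram $Q_0 \to Q_1 \to Q_2 \to \cdots$ of Verdier quotients, whose transition functors are induced by $\phi$ and whose sequential homotopy colimit is derived Morita equivalent to $\perf_\dg(A[t_+,t_-;\phi])$. Since $E$ inverts derived Morita equivalences, sends short exact sequences of dg categories to distinguished triangles, and preserves sequential homotopy colimits, these ingredients assemble into \eqref{eq:sequence}.

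For the $\bbA^1$-homotopy invariant statement, the key computation is $E(\mathrm{B}) = \id - E({}_\phi A) \colon E(A) \to E(A)$. This uses $\bbA^1$-invariance twice: the inclusion $A \hookrightarrow A[t_+;\phi]$ becomes an isomorphism under $E$ (so $E(A[t_+;\phi]) \simeq E(A)$), and similarly $E(A[\epsilon]) \simeq E(A)$ since the suspended square-zero ideal ${}_\phi A[1]$ is an ``infinitesimal'' $\bbA^1$-deformable extension. Under these identifications, the two-term complex $A[t_+;\phi] \xrightarrow{t_+\cdot-} eA[t_+;\phi]$ underlying $\mathrm{B}$ produces the difference of the contributions of its two terms: the degree zero piece yields $\id$, and the degree one piece yields $E({}_\phi A)$ after the identification $eA \simeq {}_\phi A$. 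The transition morphism on $\mathrm{hocofib}\, E(\mathrm{B}) = \mathrm{hocofib}(\id - E({}_\phi A))$ induced by $\phi$ then agrees with the identity, because $E({}_\phi A)$ is forced to equal $\id$ on this cofiber. Hence the sequential colimit \eqref{eq:sequence} collapses to its first term, yielding the triangle \eqref{eq:triangle-searched}.

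I expect the main obstacle to lie in the second paragraph: rigorously realizing $\perf_\dg(A[t_+,t_-;\phi])$ as the sequential homotopy colimit of the Verdier quotients $Q_n$, and verifying that the transitions correspond correctly to the corner isomorphism $\phi$. A direct approach via an explicit filtration of $A[t_+,t_-;\phi]$ by the corner subalgebras $t_-^n A[t_+;\phi] t_+^n$ (whose union exhausts $A[t_+,t_-;\phi]$) should provide the required comparison, though one has to be careful with the non-unital aspects of these corners and with the precise $\phi$-equivariance of the transition data.
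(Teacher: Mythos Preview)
Your approach diverges substantially from the paper's, and while the overall shape is reasonable, there is a concrete gap in the $\bbA^1$-homotopy step.

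\medskip

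\noindent\textbf{Where the paper goes instead.} Rather than realizing $A[t_+,t_-;\phi]$ as a sequential colimit of Verdier quotients of $\perf_\dg(A[t_+;\phi])$, the paper passes to the \emph{non-unital} colimit algebra $C:=\mathrm{colim}(A\xrightarrow{\phi}A\xrightarrow{\phi}\cdots)$, on which $\phi$ becomes an honest automorphism $\hat\phi$. It then proves that $A[t_+,t_-;\phi]$ is derived Morita equivalent to Keller's dg orbit category $\underline{C}/\underline{\hat\phi}^{\bbZ}$ (this is the paper's Proposition~\ref{prop:new}), and likewise relates $A[t_+;\phi]$ and $A[\epsilon]$ to $\underline{C}/\underline{\hat\phi}^{\bbN}$ and $\underline{C}[\epsilon]$. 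The triangle then comes directly from Keller's exact sequence for orbit categories \cite[\S4]{orbit}, and the sequential colimit \eqref{eq:sequence} arises by comparing the $A$-level and $C$-level pictures. The crucial trick enabling Proposition~\ref{prop:new} is an elementary $2\times 2$ matrix argument (the paper's Proposition~\ref{prop:aux3}(ii)) showing that the conjugation $a\mapsto t_+at_-$ on $A[t_+,t_-;\phi]$ is already the identity in $\Hmo(k)$. Your direct Verdier-quotient filtration could in principle be made to work, but you would essentially be reproving Keller's orbit-category sequence by hand, and your suggested corner filtration $t_-^nA[t_+;\phi]t_+^n$ does not obviously exhaust $A[t_+,t_-;\phi]$ in the right way.

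\medskip

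\noindent\textbf{The gap.} Your argument that the transition morphism on $\mathrm{hocofib}(\id-E({}_\phi A))$ ``agrees with the identity, because $E({}_\phi A)$ is forced to equal $\id$ on this cofiber'' is not rigorous. Writing $f=E({}_\phi A)$, it is true that $p\circ f=p$ for the projection $p\colon E(A)\to\mathrm{hocofib}(\id-f)$, so both $\id$ and the map induced by $f$ fill the same square; but fill-ins in triangulated categories are not unique, so this does not pin down the transition map as $\id$, nor does it show that the transfinite composition is an isomorphism. The paper does \emph{not} claim the transitions are identities. Instead it reduces to the universal case $E=U_\loc^{\bbA^1}$, uses the spectral enrichment of $\Mot_\add(k)$ to test against compact objects, and then invokes a general lemma about sequential colimits of spectra (\cite[Lem.~3.3]{Munster}) to conclude that the map from $\mathrm{hocofib}(\id-f)$ to $\mathrm{hocofib}(\id-\hat f)$ on the colimit is an isomorphism. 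Some such argument is genuinely needed here.

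\medskip

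\noindent A smaller point: your identifications $E(A[t_+;\phi])\simeq E(A)$ and $E(A[\epsilon])\simeq E(A)$ under $\bbA^1$-invariance are correct but not immediate from the definition, since $A[t_+;\phi]$ is a \emph{twisted} polynomial extension rather than $A[t]$. The paper imports this (and the identification $E(\mathrm{B})=\id-E({}_\phi A)$) from \cite[Prop.~4.6]{orbit}; you should cite or reprove that input rather than appeal directly to $\bbA^1$-invariance.
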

\begin{remark}[Generalization]
Given a dg category $\cA$, Theorem \ref{thm:main1} holds more generally with $A$ and $A[t_+,t_-;\phi]$ replaced by $\cA\otimes A$ and $\cA\otimes A[t_+,t_-;\phi]$; see \S\ref{sub:generalization}.
\end{remark}
\begin{corollary}\label{cor:main1}
Given a dg category $\cA$, we have a distinguished triangle of spectra:
$$ KH(\cA\otimes A) \stackrel{\id- KH({}_{\id}\cA \otimes {}_\phi A)}{\too} KH(\cA\otimes A) \to KH(\cA\otimes A[t_+,t_-;\phi])\stackrel{\partial}{\to} \Sigma KH(\cA\otimes A)\,.
$$
\end{corollary}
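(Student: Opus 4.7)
The plan is almost purely to invoke Theorem~\ref{thm:main1} after promoting it via the generalization remark. Since homotopy $K$-theory $KH$ is listed in \S2 as an $\bbA^1$-homotopy invariant with values in the triangulated category of spectra, the second half of Theorem~\ref{thm:main1} applies and yields the desired triangle, provided we allow the coefficients to be tensored with an arbitrary dg category $\cA$.

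Concretely, I would first cite the generalization stated immediately after Theorem~\ref{thm:main1}: the triangle
$$E(A) \stackrel{\id- E({}_\phi A)}{\too} E(A) \too E(A[t_+,t_-;\phi])\stackrel{\partial}{\too} \Sigma E(A)$$
remains valid with $A$ replaced by $\cA \otimes A$, with $A[t_+,t_-;\phi]$ replaced by $\cA \otimes A[t_+,t_-;\phi]$, and with the bimodule ${}_\phi A$ replaced by the external tensor product ${}_\id \cA \otimes {}_\phi A$ (the identity bimodule on $\cA$ tensored with the corner bimodule on $A$). Then I would specialize to $E = KH$, noting that $KH$ is an $\bbA^1$-homotopy invariant taking values in the stable homotopy category of spectra, so that the distinguished triangles of Theorem~\ref{thm:main1} are precisely distinguished triangles of spectra.

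The resulting triangle
$$KH(\cA\otimes A) \xrightarrow{\id- KH({}_{\id}\cA \otimes {}_\phi A)} KH(\cA\otimes A) \to KH(\cA\otimes A[t_+,t_-;\phi])\xrightarrow{\partial} \Sigma KH(\cA\otimes A)$$
is then exactly the statement of Corollary~\ref{cor:main1}. There is no genuine obstacle; the only subtlety worth mentioning is that the endomorphism $\id - E({}_\phi A)$ of $E(A)$, produced in the $\bbA^1$-invariant reduction of $E(\mathrm B)$, transports correctly under tensoring with $\cA$ to the endomorphism induced by the bimodule ${}_\id\cA \otimes {}_\phi A$ on $KH(\cA \otimes A)$. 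This is immediate from the monoidal compatibility built into the dg-bimodule formalism of \S\ref{sub:dg}: tensoring the bimodule $\mathrm B$ (which encodes $\id - E({}_\phi A)$ on any $\bbA^1$-homotopy invariant) with the identity bimodule on $\cA$ produces exactly the bimodule whose $KH$-realization is $\id - KH({}_\id \cA \otimes {}_\phi A)$. Hence the corollary follows directly.
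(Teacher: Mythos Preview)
Your proposal is correct and matches the paper's approach exactly: the corollary has no separate proof in the paper because it is obtained by specializing the generalized form of Theorem~\ref{thm:main1} (the Remark immediately following it, proved in \S\ref{sub:generalization}) to the $\bbA^1$-homotopy invariant $E=KH$. Your additional remark about the bimodule ${}_{\id}\cA\otimes{}_\phi A$ is consistent with how the paper establishes the generalization via the symmetric monoidal structure on $\Mot_\loc^{\bbA^1}(k)$ and the universality of $U_\loc^{\bbA^1}$.
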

\begin{remark}[Related work]\label{rk:work}
Given a (not necessarily unital) $k$-algebra $C$, Ara-Brustenga-Corti\~nas proved in \cite[Thms.~3.6 and 8.4]{Munster} the analogue of Corollary \ref{cor:main1} with $\cA$ replaced by $C$. Their proof, which is inspired from operator $K$-theory, makes essential use of non-unital algebras. Since these latter objects don't belong to the realm of dg categories, our proof of Corollary \ref{cor:main1} is necessarily different. Among other ideas used in the proof, we show that every corner skew Laurent polynomial algebra is derived Morita equivalent to a dg orbit category in the sense of Keller; see Proposition \ref{prop:new}. Finally, note that in contrast with Ara-Brustenga-Corti\~nas' result, Corollary \ref{cor:main1} (and more generally Theorem \ref{thm:main1}) holds for dg algebras~and~schemes.
\end{remark}
Let $Q=(Q_0,Q_1,s,r)$ be a finite quiver, without sources, with $v$ vertices and $v'$ sinks. We assume that the set $Q_0$ is ordered with the first $v'$ elements corresponding to the sinks. Let $I'_Q$ be the incidence matrix of $Q$, $I_Q$ the matrix obtained from $I'_Q$ by removing the first $v'$ rows (which are zero), and $I_Q^t$ the transpose of $I_Q$.
\begin{theorem}\label{thm:main2}
Let $\cA$ be a dg category and $Q$ a finite quiver without sources. For every $\bbA^1$-homotopy invariant $E$, we have a distinguished triangle
$$ \oplus_{i=1}^{v-v'} E(\cA) \stackrel{\binom{0}{\id} - I^t_Q}{\too} \oplus_{i=1}^v E(\cA) \too E(\cA\otimes L_Q) \stackrel{\partial}{\too} \oplus^{v-v'}_{i=1} \Sigma E(\cA)\,.
$$
\end{theorem}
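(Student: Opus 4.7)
The plan is to apply Theorem \ref{thm:main1}, in the dg-category generalization from the Remark, to the corner skew Laurent polynomial description of $L_Q$ provided by Example \ref{ex:Leavitt-path}, and then identify the resulting triangle with the one in the statement. Writing $L_Q = (L_Q)_0[t_+,t_-;\phi]$ with $t_+ = \sum_{i \in Q_0}\alpha_i$ and $\phi(x) = t_+ x t_-$, Theorem \ref{thm:main1} produces the distinguished triangle
$$
E(\cA\otimes (L_Q)_0) \stackrel{\id - E(\cA\otimes{}_\phi(L_Q)_0)}{\too} E(\cA\otimes (L_Q)_0) \too E(\cA\otimes L_Q) \too \Sigma E(\cA\otimes(L_Q)_0),
$$
so the task reduces to identifying the left-hand part of this triangle with the left-hand part of the desired one.

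First I would unpack the structure of $(L_Q)_0$: it is the sequential union of the finite-dimensional semi-simple subalgebras $A_n$ spanned by monomials $\mu\nu^\ast$ with $|\mu|=|\nu|\le n$. In particular $A_0 = k^v$ is the algebra of vertex idempotents, Morita invariance gives $E(\cA\otimes A_0) \simeq \oplus_{i=1}^v E(\cA)$, and the inclusion $A_n\hookrightarrow A_{n+1}$ refines each non-sink idempotent $e_i$ via the Cuntz--Krieger relation $e_i = \sum_{s(\alpha)=i}\alpha\alpha^\ast$, with ``block multiplicities'' encoded by the incidence matrix $I_Q$. Since $E$ preserves sequential homotopy colimits, $E(\cA\otimes (L_Q)_0) \simeq \hocolim_n E(\cA\otimes A_n)$.

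A direct computation gives $\phi(e_i) = \alpha_i\alpha_i^\ast \in A_1$ for every $i \in Q_0$, and more generally $\phi$ acts as the Bratteli ``shift'' along the filtration $\{A_n\}$. The key point is that $\phi$ factors through the corner $e(L_Q)_0 e$, where $e = \sum_i\alpha_i\alpha_i^\ast$ is supported (via Cuntz--Krieger) only on the non-sink vertex idempotents; hence $\phi$ vanishes on the sink components of $A_0$. Combined with a telescope-style argument---the cone of $\id-\mathrm{shift}$ on a mapping telescope is computed at the initial stage---this allows one to replace the triangle above by the one obtained from the map $E(\cA\otimes A_0)\to E(\cA\otimes A_1)$ induced by $\phi$ on the non-sink part of $A_0$. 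Under the identifications $E(\cA\otimes A_0) \simeq \oplus_{i=1}^v E(\cA)$ and $E$ of the non-sink part of $A_0$ $\simeq \oplus_{i=1}^{v-v'} E(\cA)$, the $\id$ summand in $\id-\phi_\ast$ becomes the canonical inclusion $\binom{0}{\id}$ of the non-sink summands, and the $\phi_\ast$ summand becomes precisely the transpose of the incidence matrix $I_Q^t \colon \oplus_{i=1}^{v-v'}E(\cA)\to \oplus_{i=1}^v E(\cA)$.

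The main obstacle is the last identification: verifying, via careful manipulation with the Cuntz--Krieger relations, that the Bratteli shift induced by $\phi$ indeed corresponds after Morita invariance to the transpose of the incidence matrix with the correct source/target structure (non-sink copies for the source, all vertex copies for the target). This combinatorial bookkeeping---tracking which vertices participate in $e$, and how the Morita identifications interact with the corner isomorphism---is the heart of the argument and yields the stated triangle.
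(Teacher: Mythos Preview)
Your overall strategy coincides with the paper's: reduce to Theorem~\ref{thm:main1} for the corner skew Laurent description of $L_Q$, then analyze $\id - E({}_\phi(L_Q)_0)$ using the length filtration $(L_Q)_0=\bigcup_n L_{0,n}$ by finite products of matrix algebras (the paper cites \cite[\S5]{Munster} for this filtration). The paper likewise reduces first to $E=U^{\bbA^1}_\loc$ and $\cA=k$ via the universal property, then shows the level-wise cofibers in the resulting ladder are all identified with the cofiber of $\binom{0}{\id}-I_Q^t$ by explicit matrix manipulation.

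There is, however, a concrete error in your bookkeeping. You write that $e=\sum_i \alpha_i\alpha_i^\ast$ is supported only on non-sink idempotents and conclude that ``$\phi$ vanishes on the sink components of $A_0$''. The first assertion is correct (each $\alpha_i\alpha_i^\ast$ sits under $e_{s(\alpha_i)}$ and $s(\alpha_i)$ is a non-sink), but the conclusion does not follow: what you have shown is that the \emph{image} of $\phi$ lies under the non-sink idempotents, not that $\phi$ kills the sink summands of its domain. Indeed $\phi(e_i)=\alpha_i\alpha_i^\ast\neq 0$ for every $i$, sinks included, since $Q$ has no sources. This confusion propagates to your final identification: in the paper's computation (again following \cite[\S5]{Munster}) it is the \emph{inclusion} $L_{0,n}\hookrightarrow L_{0,n+1}$ (the filtration representative of $\id$) that carries the $I_Q^t$-block, while $\phi\colon L_{0,n}\to L_{0,n+1}$ contributes the shift $\binom{0}{\id}$; you have these roles reversed. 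Because the target map is a difference, the swap does not change the cofiber, but your stated reason for restricting the \emph{source} to the non-sink summands is invalid, and your telescope reduction as written does not produce a map with the correct domain $\oplus^{v-v'}E(\cA)$. The fix is to argue, as the paper does, that each level-wise cofiber of $(\text{inclusion})-\phi$ is already isomorphic to the cofiber of $\binom{0}{\id}-I_Q^t$, via elementary row/column operations on the explicit block matrices.
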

Theorem \ref{thm:main2} shows that all the information concerning $\bbA^1$-homotopy invariants of Leavitt path algebras $L_Q$ is encoded in the incidence matrix of the quiver $Q$.
\begin{example}[Jacobson algebras]
Let $Q$ be the quiver with two vertices $\{1,2\}$ and $2(n+1)$ arrows ($n+1$ from $1$ to $1$ and $n+1$ from $1$ to $2$). In this particular case, the distinguished triangle of Theorem \ref{thm:main2} reduces to
$$ E(\cA) \stackrel{\binom{n+1}{n}}{\too} E(\cA) \oplus E(\cA) \too E(\cA\otimes J_n) \stackrel{\partial}{\too} \Sigma E(\cA)\,.$$
Since $(n,n+1)=1$, we conclude that $E(\cA\otimes J_n) \simeq E(\cA)$. This shows that the $\bbA^1$-homotopy invariants don't distinguish the Jacobson algebras $J_n$ from $k$. Note that $J_n$ is much bigger than $k$; for instance, it contains the path algebra $kQ$.
\end{example}
\begin{example}[Leavitt algebras]\label{ex:Leavitt}
Let $Q$ be the quiver with one vertex and $n+1$ arrows. In this particular case, the distinguished triangle of Theorem \ref{thm:main2}~reduces~to
\begin{equation}\label{eq:triangle}
E(\cA) \stackrel{n\cdot \id}{\too} E(\cA) \too E(\cA\otimes L_n) \stackrel{\partial}{\too} \Sigma E(\cA)\,.
\end{equation}
When $n=0$, the distinguished triangle \eqref{eq:triangle} splits and gives rise to the ``fundamental'' isomorphism $E(\cA\otimes L_0)\simeq E(\cA) \oplus \Sigma E(\cA)$. When $n=1$, we have $E(\cA\otimes L_1)=0$. In the remaining case $n \geq 2$, $E(\cA\otimes L_n)$ identifies with the mod-$n$ Moore object of $E(\cA)$. Intuitively speaking, this shows that the functor $\cA \mapsto \cA\otimes L_n$, with $n \geq 2$, is a model of the mod-$n$ Moore construction.
\end{example}
\begin{proposition}\label{prop:very-last}
Let $l_1^{\nu_1} \times \cdots \times l_r^{\nu_r}$ be the prime decomposition of an integer $n \geq 2$. For every dg category $\cA$ and $\bbA^1$-homotopy invariant $E$, we have a direct sum decomposition $E(\cA\otimes L_n) \simeq E(\cA \otimes L_{l_1^{\nu_1}})\oplus \cdots \oplus E(\cA \otimes L_{l_r^{\nu_r}})$.
\end{proposition}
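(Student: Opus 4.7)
The plan is to argue by induction on $r$, reducing to the case $r=2$: given $n=ab$ with $\gcd(a,b)=1$, we must produce an isomorphism $E(\cA\otimes L_n)\simeq E(\cA\otimes L_a)\oplus E(\cA\otimes L_b)$. By Example~\ref{ex:Leavitt}, for every integer $m\geq 2$ the object $E(\cA\otimes L_m)$ identifies with the cofiber $C_m$ of $m\cdot\id_X$, where $X:=E(\cA)$ lives in the target triangulated category of $E$. The problem thereby reduces to a purely triangulated-categorical assertion: $C_{ab}(X)\simeq C_a(X)\oplus C_b(X)$ whenever $\gcd(a,b)=1$.

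To prove the latter, apply the octahedral axiom to the factorization $n\cdot\id_X = (a\cdot\id_X)\circ(b\cdot\id_X)$, producing a distinguished triangle
$$C_b\xrightarrow{\mu}C_n\xrightarrow{\nu}C_a\xrightarrow{\partial}\Sigma C_b,$$
where $\nu$ is induced by the morphism of defining triangles with vertical components $(b\cdot\id,\,\id)$. Fix $\alpha,\beta\in\bbZ$ with $\alpha a+\beta b=1$ and define $\iota_a\colon C_a\to C_n$ from the morphism of triangles with vertical components $(\id,\,b\cdot\id)$ (compatible since $(b\cdot\id)\circ(a\cdot\id)=n\cdot\id$). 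Composition of morphisms of triangles yields $\nu\circ\iota_a = b\cdot\id_{C_a}$. The key observation is that $a\cdot\id_{C_a}$ is nilpotent of index two in $\End(C_a)$: writing the defining triangle as $X\xrightarrow{a}X\xrightarrow{q_a}C_a\xrightarrow{r_a}\Sigma X$, the relation $q_a\circ a=0$ provides a factorization $a\cdot\id_{C_a} = h\circ r_a$ for some $h\colon\Sigma X\to C_a$, and the rotated triangle yields $(\Sigma a)\circ r_a = 0$; hence $(a\cdot\id_{C_a})^2 = h\circ((\Sigma a)\circ r_a) = 0$. Consequently, the identity $\beta b(1+\alpha a) = 1-\alpha^2 a^2$ in $\bbZ$ transports to $\End(C_a)$, where $\alpha^2 a^2\cdot\id_{C_a} = 0$, showing that $b\cdot\id_{C_a} = \nu\circ\iota_a$ is invertible with inverse $\beta(1+\alpha a)\cdot\id_{C_a}$. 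Precomposing $\iota_a$ with this inverse yields a section of $\nu$ that splits the triangle.

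The main conceptual obstacle is that $a\cdot\id_{C_a}$ need not vanish (it merely squares to zero), so the B\'ezout identity $\alpha a+\beta b=1$ cannot be converted into $\beta b\cdot\id_{C_a}=\id_{C_a}$ by naive reduction modulo $a$; it must instead be lifted to the endomorphism ring of $C_a$, which is exactly what the nilpotency computation enables.
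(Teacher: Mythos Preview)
Your approach is sound in spirit and genuinely different from the paper's. The paper does not argue inside an arbitrary triangulated target; instead it exploits the action of the stable homotopy category on $\Mot_\loc^{\bbA^1}(k)$, identifies $U_\loc^{\bbA^1}(\cA\otimes L_m)$ with $\bbS/m\otimes U_\loc^{\bbA^1}(\cA)$ via Example~\ref{ex:Leavitt}, imports the classical splitting $\bbS/n\simeq\bigoplus_i\bbS/l_i^{\nu_i}$ of Moore spectra, and then transfers the conclusion to an arbitrary $E$ through the universal property of $U_\loc^{\bbA^1}$. Your route is more self-contained: it needs neither the spectral enrichment nor the universality step, working entirely with the axioms of a triangulated category once the mod-$m$ Moore description is available.

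There is, however, a gap at the line ``Composition of morphisms of triangles yields $\nu\circ\iota_a = b\cdot\id_{C_a}$.'' The third arrow in a morphism of triangles is not determined by the first two, so all you obtain is that $\nu\circ\iota_a$ and $b\cdot\id_{C_a}$ are both fills of the same partial morphism of triangles with components $(b,b)$; there is no reason for them to coincide. Two short patches are available. First, the difference $\delta:=\nu\circ\iota_a - b\cdot\id_{C_a}$ satisfies $\delta\circ q_a=0$ and $r_a\circ\delta=0$, hence factors as $\delta=\psi\circ r_a=q_a\circ\chi$ for suitable $\psi,\chi$, whence $\delta^2=\psi\circ(r_a\circ q_a)\circ\chi=0$; since you already showed that $b\cdot\id_{C_a}$ is a central unit in $\End(C_a)$, the perturbation $\nu\circ\iota_a=b\cdot\id_{C_a}+\delta$ remains invertible and a section of $\nu$ follows. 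Second, and more directly, you can bypass $\iota_a$ altogether: your nilpotency computation gives $a^2\cdot\id_{C_a}=0$ and, symmetrically, $b^2\cdot\id_{\Sigma C_b}=0$, so the connecting map $\partial\colon C_a\to\Sigma C_b$ is annihilated by both $a^2$ and $b^2$, hence by $1\in\bbZ=a^2\bbZ+b^2\bbZ$, and the octahedral triangle splits.
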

Roughly speaking, Proposition \ref{prop:very-last} shows that all the $\bbA^1$-homotopy invariants of Leavitt algebras are ``$l$-local''. Note that $L_n \not \simeq L_{l_1^{\nu_1}}\times \cdots \times L_{l_r^{\nu_r}}$.
\begin{remark}[Homotopy $K$-theory]\label{rk:KH}
By taking $E=KH$ in Theorem \ref{thm:main2}, we obtain a distinguished triangle of spectra
\begin{equation*}
\oplus^{v-v'}_{i=1} KH(\cA) \stackrel{\binom{0}{\id} - I^t_Q}{\too} \oplus^v_{i=1} KH(\cA) \to KH(\cA\otimes L_Q) \stackrel{\partial}{\to} \oplus^{v-v'}_{i=1} \Sigma KH(\cA)\,.
\end{equation*}
Given a (not necessarily unital) $k$-algebra $C$, Ara-Brustenga-Corti\~nas constructed in \cite[Thm.~8.6]{Munster} the analogue of the preceding distinguished triangle with $\cA$ replaced by $C$. Our construction is different and applies also to dg categories and schemes.
\end{remark}
\begin{remark}\label{rk:very-last}
By taking $E=KH$ and $n=l^\nu$ in Example \ref{ex:Leavitt}, we obtain an isomorphism between $KH(\cA\otimes L_{l^\nu})$ and the mod-$l^\nu$ homotopy $K$-theory spectrum $KH(\cA;\bbZ/l^\nu)$. When $l \nmid \mathrm{char}(k)$, the latter spectrum is isomorphic to $\bbK(\cA;\bbZ/l^\nu)$.
\end{remark}
\subsection*{Mod-$l^\nu$ algebraic $K$-theory of Leavitt path algebras}
Let $l^\nu$ be a prime power such that $l \neq \mathrm{char}(k)$ and $Q$ a finite quiver without sources. By taking $\cA=k$ and $E=\bbK(-;\bbZ/l^\nu)$ in Theorem \ref{thm:main2}, we obtain a distinguished triangle of spectra
$$ \oplus^{v-v'}_{i=1} \bbK(k;\bbZ/l^\nu) \stackrel{\binom{0}{\id} - I^t_Q}{\too} \oplus^v_{i=1} \bbK(k;\bbZ/l^\nu) \to \bbK(L_Q; \bbZ/l^\nu) \stackrel{\partial}{\to} \oplus^{v-v'}_{i=1} \Sigma \bbK(k;\bbZ/l^\nu)\,.$$
\begin{remark}
The preceding triangle follows also from the work of Ara-Brustenga-Corti\~nas \cite{Munster}. Indeed, since by hypothesis $l \nmid \mathrm{char}(k)$, the functors $\bbK(-;\bbZ/l^\nu)$ and $KH(-;\bbZ/l^\nu)$ are isomorphic. Moreover, as explained in Remark \ref{rk:very-last}, the latter functor identifies with $KH(-\otimes L_{l^\nu})$. Therefore, if in Remark \ref{rk:KH} we take for $C$ the $k$-algebra $L_{l^\nu}$, we obtain the preceding distinguished triangle of spectra.
\end{remark}
Assume that $k$ is algebraically closed. As proved by Suslin\footnote{Given a quiver $Q$, let $C^\ast_\bbC(Q)$ be the associated Cuntz-Krieger $C^\ast$-algebra. Corti\~nas kindly informed the author that the work of Suslin was also used in \cite[Thm.~9.4]{Munster} in order to prove that $\bbK_n(\bbC \otimes L_Q) \simeq K_n^{\mathrm{top}}(C_\bbC^\ast(Q)), n \geq 0$, for every quiver $Q$ without sinks such that $\mathrm{det}(\binom{0}{\id} - I^t_Q)\neq 0$.} in \cite[Cor.~3.13]{Suslin},~we have $\bbK_n(k;\bbZ/l^\nu)\simeq \bbZ/l^\nu$ if $n \geq 0$ is even and $\bbK_n(k;\bbZ/l^\nu)=0$ otherwise. Consequently, making use of the long exact sequence of algebraic $K$-theory groups associated to the preceding triangle of spectra, we obtain the following result:
\begin{corollary}\label{cor:main-3}
We have the following computation
$$ \bbK_n(L_Q;\bbZ/l^\nu)\simeq \left\{ \begin{array}{lll}
\mathrm{cokernel}\,\,\mathrm{of}\,\, \mathrm{M} & \mathrm{if} & n\geq 0\,\,\mathrm{even} \\
\mathrm{kernel}\,\,\mathrm{of}\,\,\mathrm{M} & \mathrm{if} & n\geq 0 \,\,\mathrm{odd} \\
0 & \mathrm{if} & n<0 \,,
\end{array} \right.
$$
where $\mathrm{M}$ stands for the homomorphism $\bigoplus^{v-v'}_{i=1} \bbZ/l^\nu \stackrel{\binom{0}{\id} - I^t_Q}{\too} \bigoplus^v_{i=1} \bbZ/l^\nu$.
\end{corollary}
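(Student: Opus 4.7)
The plan is to pass to homotopy groups in the distinguished triangle of spectra displayed immediately before the statement, substitute Suslin's computation $\bbK_n(k;\bbZ/l^\nu)\simeq \bbZ/l^\nu$ for $n\geq 0$ even and $\bbK_n(k;\bbZ/l^\nu)=0$ otherwise, and read the answer off directly. Because both $\bigoplus^{v-v'}_{i=1}\bbK(k;\bbZ/l^\nu)$ and $\bigoplus^{v}_{i=1}\bbK(k;\bbZ/l^\nu)$ have homotopy concentrated in non-negative even degrees, every occurrence of $\mathrm{M}$ in the resulting long exact sequence is flanked by zeros, so the sequence fractures into short independent fragments.

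Concretely, I would isolate the fragment
\begin{equation*}
\bigoplus^{v}_{i=1}\bbK_{n+1}(k;\bbZ/l^\nu) \to \bigoplus^{v-v'}_{i=1}\bbK_n(k;\bbZ/l^\nu) \stackrel{\mathrm{M}}{\to} \bigoplus^{v}_{i=1}\bbK_n(k;\bbZ/l^\nu) \to \bbK_n(L_Q;\bbZ/l^\nu) \to \bigoplus^{v-v'}_{i=1}\bbK_{n-1}(k;\bbZ/l^\nu) \stackrel{\mathrm{M}}{\to} \bigoplus^{v}_{i=1}\bbK_{n-1}(k;\bbZ/l^\nu)
\end{equation*}
and split into three cases according to the parity and sign of $n$. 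For $n\geq 0$ even, the leftmost term vanishes because $n+1$ is odd, and the rightmost two terms vanish because $n-1$ is either negative or odd; the fragment collapses to the short exact sequence $0 \to \bigoplus^{v-v'}\bbZ/l^\nu \stackrel{\mathrm{M}}{\to} \bigoplus^{v}\bbZ/l^\nu \to \bbK_n(L_Q;\bbZ/l^\nu) \to 0$, yielding the cokernel formula. For $n\geq 1$ odd, the two middle terms vanish while the flanking degrees $n\pm 1$ contribute $\bbZ/l^\nu$'s, so the fragment collapses to $0 \to \bbK_n(L_Q;\bbZ/l^\nu) \to \bigoplus^{v-v'}\bbZ/l^\nu \stackrel{\mathrm{M}}{\to} \bigoplus^{v}\bbZ/l^\nu$, yielding the kernel formula. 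For $n<0$, all four surrounding groups vanish (Suslin kills the $k$-side in every negative degree and in degree $-1$), forcing $\bbK_n(L_Q;\bbZ/l^\nu)=0$.

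There is essentially no obstacle beyond this bookkeeping; the only point worth a moment's attention is that the connecting homomorphism induced on $\pi_\ast$ is genuinely the map $\mathrm{M}=\binom{0}{\id}-I^t_Q$ of the statement, with $\bbZ/l^\nu$-coefficients. This is immediate from the naturality of Theorem~\ref{thm:main2} applied with $\cA=k$ together with the fact that $\pi_0\bbK(k;\bbZ/l^\nu)=\bbZ/l^\nu$ is generated by the class of $k$, so each integer entry of $\binom{0}{\id}-I^t_Q$ simply reduces modulo $l^\nu$. With that identification in hand, the corollary follows from the case analysis above.
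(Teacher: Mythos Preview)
Your approach is exactly the paper's: pass to the long exact sequence of homotopy groups from the displayed triangle, plug in Suslin's computation, and read off the answer case by case. One small bookkeeping slip: the term immediately to the left of $\bigoplus^{v-v'}\bbK_n(k;\bbZ/l^\nu)$ in the long exact sequence is $\bbK_{n+1}(L_Q;\bbZ/l^\nu)$, not $\bigoplus^{v}\bbK_{n+1}(k;\bbZ/l^\nu)$, so for $n$ even you do not directly get that $\mathrm{M}$ is injective; but you do not need this---exactness of $\bigoplus^{v-v'}\bbZ/l^\nu \stackrel{\mathrm{M}}{\to} \bigoplus^{v}\bbZ/l^\nu \to \bbK_n(L_Q;\bbZ/l^\nu) \to 0$ already gives the cokernel description, and the odd and negative cases are unaffected.
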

Corollary \ref{cor:main-3} provides a complete and explicit computation of the mod-$l^\nu$ (nonconnective) algebraic $K$-theory of Leavitt path algebras. To the best of the author's knowledge, these computations are new in the literature. In particular, they yield a complete answer to the ``mod-$l^\nu$ version'' of Question $2$ raised by Gabe-Ruiz-Tomforde-Whalen in \cite[page~38]{LeavittK}. These computations lead also naturally to the following vanishing and divisibility properties of algebraic $K$-theory:
\begin{proposition}\label{prop:last}
\begin{itemize}
\item[(i)] If there exists a prime power $l^\nu$ and an even (resp. odd) integer $n'\geq 0$ such that $\bbK_{n'}(L_Q;\bbZ/l^\nu)\neq 0$, then for every even (resp. odd) integer $n\geq 0$ at least one of the groups $\bbK_n(L_Q), \bbK_{n-1}(L_Q)$ is non-zero.
\item[(ii)] If there exists a prime power $l^\nu$ such that $\bbK_n(L_Q; \bbZ/l^\nu)=0$ for every $n \geq 0$, then the groups $\bbK_n(L_Q), n \geq 0$, are uniquely $l^\nu$-divisible, \ie $\bbZ[1/l^\nu]$-modules.
\end{itemize}
\end{proposition}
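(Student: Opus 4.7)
The proposition falls out of Corollary \ref{cor:main-3} once one couples it with the universal coefficient sequence for mod-$l^\nu$ $K$-theory. The plan is to first isolate the input, then do each part in a single paragraph.

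The input I would use is the cofiber sequence of spectra $\bbK(L_Q) \stackrel{l^\nu}{\to} \bbK(L_Q) \to \bbK(L_Q;\bbZ/l^\nu)$, whose long exact sequence of homotopy groups yields the short exact universal coefficient sequence
\begin{equation}\label{eq:UCT}
0 \too \bbK_n(L_Q)/l^\nu \too \bbK_n(L_Q;\bbZ/l^\nu) \too \bbK_{n-1}(L_Q)[l^\nu] \too 0
\end{equation}
for every integer $n$, where $(-)[l^\nu]$ denotes $l^\nu$-torsion. The decisive structural observation from Corollary \ref{cor:main-3} is that $\bbK_n(L_Q;\bbZ/l^\nu)$ depends only on the parity of $n$ for $n\geq 0$: it equals $\mathrm{coker}(\mathrm{M})$ in all even non-negative degrees and $\mathrm{ker}(\mathrm{M})$ in all odd non-negative degrees.

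For part (i), suppose $\bbK_{n'}(L_Q;\bbZ/l^\nu)\neq 0$ for some even (resp. odd) $n'\geq 0$. By the independence-of-$n$ observation above, $\bbK_n(L_Q;\bbZ/l^\nu)\neq 0$ for \emph{every} even (resp. odd) $n\geq 0$. Fixing such an $n$, the sequence \eqref{eq:UCT} forces at least one of $\bbK_n(L_Q)/l^\nu$ or $\bbK_{n-1}(L_Q)[l^\nu]$ to be non-zero, and therefore at least one of $\bbK_n(L_Q)$ or $\bbK_{n-1}(L_Q)$ to be non-zero.

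For part (ii), suppose $\bbK_n(L_Q;\bbZ/l^\nu)=0$ for every $n\geq 0$. Applying \eqref{eq:UCT} in degree $n\geq 0$ gives $\bbK_n(L_Q)/l^\nu=0$, while applying it in degree $n+1\geq 1$ (and using that $n\geq 0$) gives $\bbK_n(L_Q)[l^\nu]=0$. Thus multiplication by $l^\nu$ is both surjective and injective on $\bbK_n(L_Q)$, i.e.\ an isomorphism, which is precisely the statement that $\bbK_n(L_Q)$ is uniquely $l^\nu$-divisible, equivalently a $\bbZ[1/l^\nu]$-module.

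There is essentially no hard step here; the only non-routine point is noticing that Corollary \ref{cor:main-3} yields the parity-only dependence that propagates a single non-vanishing (or global vanishing) across all degrees of the correct parity, which is exactly what lets \eqref{eq:UCT} be used uniformly.
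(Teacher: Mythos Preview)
Your proof is correct and is exactly the argument the paper has in mind: the paper's proof simply says to combine the universal coefficients sequence with Corollary~\ref{cor:main-3}, and you have spelled out precisely how that combination works, including the key observation that Corollary~\ref{cor:main-3} makes $\bbK_n(L_Q;\bbZ/l^\nu)$ depend only on the parity of $n\geq 0$.
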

\begin{proof}
Combine the universal coefficients sequence (see \cite[\S2.2.2]{book})
$$ 0 \too \bbK_n(L_Q) \otimes_\bbZ \bbZ/l^\nu \too \bbK_n(L_Q;\bbZ/l^\nu) \too {}_{l^\nu} \bbK_{n-1}(L_Q)\too 0 $$
with the computation of Corollary \ref{cor:main-3}.
\end{proof}
\begin{example}[Quivers without sinks]
Let $Q$ be a quiver without sinks. In this case, $\binom{0}{\id} - I^t_Q$ is a square matrix. If $l$ is a prime such that $l\nmid \mathrm{det}(\binom{0}{\id} - I^t_Q)$, then the homomorphism $\mathrm{M}$ of Corollary \ref{cor:main-3} is invertible. Consequently, $\bbK_n(L_Q;\bbZ/l^\nu)=0$ for every $n \geq 0$. Making use of Proposition \ref{prop:last}(ii), we then conclude that the algebraic $K$-theory groups $\bbK_n(L_Q), n \geq 0$, are uniquely $l^\nu$-divisible.
\end{example}
\subsection*{Schemes and stacks}
Let $X$ be a quasi-compact quasi-separated $k$-scheme. By applying the results/examples/remarks of \S\ref{sec:results} to the dg category $\cA=\perf_\dg(X)$, we obtain corresponding results/examples/remarks concerning the scheme $X$. For instance, Remark \ref{rk:very-last} yields an isomorphism between $KH(\perf_\dg(X)\otimes L_{l^\nu})$ and $KH(X;\bbZ/l^\nu)$. When $l \nmid \mathrm{char}(k)$, the latter spectrum is isomorphic to $\bbK(X;\bbZ/l^\nu)$. Roughly speaking, the dg category $\perf_\dg(X) \otimes L_{l^\nu}$ may be understood as the ``noncommutative mod-$l^\nu$ Moore object of $X$''. More generally, we can consider the dg category $\cA=\perf_\dg(\cX)$ of perfect complexes of an algebraic stack $\cX$. In the particular case of a quotient stack $\cX=[X/G]$, with $G$ an algebraic group scheme acting on $X$, Remark \ref{rk:very-last} yields an isomorphism between $KH(\perf_\dg([X/G])\otimes L_{l^\nu})$ and the mod-$l^\nu$ $G$-equivariant homotopy $K$-theory spectrum $KH^G(X;\bbZ/l^\nu)$. When $l \nmid \mathrm{char}(k)$, the latter spectrum is isomorphic to $\bbK^G(X;\bbZ/l^\nu)$.
\section{Preliminaries}
\subsection{Dg categories}\label{sub:dg}
Let $(\cC(k),\otimes, k)$ be the category of cochain complexes of $k$-vector spaces. A {\em dg category $\cA$} is a category enriched over $\cC(k)$ and a {\em dg functor $F\colon \cA \to \cB$} is a functor enriched over $\cC(k)$; consult Keller's ICM survey \cite{ICM-Keller}. 

Let $\cA$ be a dg category. The opposite dg category $\cA^\op$ has the same objects as $\cA$ and $\cA^\op(x,y):=\cA(y,x)$. A {\em right dg $\cA$-module} is a dg functor $\cA^\op \to \cC_\dg(k)$ with values in the dg category $\cC_\dg(k)$ of complexes of $k$-vector spaces. Let us denote by $\cC(\cA)$ the category of right dg $\cA$-modules. Following \cite[\S3.2]{ICM-Keller}, the {\em derived category $\cD(\cA)$ of $\cA$} is defined as the localization of $\cC(\cA)$ with respect to the objectwise quasi-isomorphisms. We write $\cD_c(\cA)$ for the subcategory of compact objects. 

A dg functor $F\colon\cA\to \cB$ is called a {\em derived Morita equivalence} if it induces an equivalence of categories $\cD(\cA) \simeq \cD(\cB)$; see \cite[\S4.6]{ICM-Keller}. As proved in \cite[Thm.~1.37]{book}, $\dgcat(k)$ admits a Quillen model structure whose weak equivalences are the derived Morita equivalences. Let $\Hmo(k)$ be the associated homotopy category.

The {\em tensor product $\cA\otimes\cB$} of (small) dg categories is defined as follows: the set of objects is the cartesian product and $(\cA\otimes\cB)((x,w),(y,z)):= \cA(x,y) \otimes \cB(w,z)$. As explained in \cite[\S2.3]{ICM-Keller}, this construction gives rise to a symmetric monoidal structure on $\dgcat(k)$, which descends to the homotopy category $\Hmo(k)$. 

A {\em dg $\cA\text{-}\cB$-bimodule $\mathrm{B}$} is a dg functor $\cA\otimes \cB^\op \to \cC_\dg(k)$ or equivalently a right dg $(\cA^\op \otimes \cB)$-module. Associated to a dg functor $F\colon \cA\to \cB$, we have the dg $\cA\text{-}\cB$-bimodule ${}_F\cB\colon \cA\otimes \cB^\op \to \cC_\dg(k), (x,z) \mapsto \cB(z, F(x))$. Let us write $\rep(\cA,\cB)$ for the full triangulated subcategory of $\cD(\cA^\op \otimes \cB)$ consisting of those dg $\cA\text{-}\cB$-bimodules $\mathrm{B}$ such that for every object $x \in \cA$ the associated right dg $\cB$-module $\mathrm{B}(x,-)$ belongs to $\cD_c(\cB)$. Clearly, the dg $\cA\text{-}\cB$-bimodules ${}_F\cB$ belongs to $\rep(\cA,\cB)$.

As explained in \cite[\S 1.6.3]{book}, there is a natural bijection between $\Hom_{\Hmo(k)}(\cA,\cB)$ and the set of isomorphism classes of $\rep(\cA,\cB)$. Under this bijection, the composition law corresponds to the tensor product of dg bimodules. 
\begin{notation}\label{not:unital}
Given a non-unital homomorphism $\phi\colon A \to B$ between unital $k$-algebras, let us denote by ${}_\phi B$ the $A\text{-}B$-bimodule $\phi(1)B$ equipped with the $A\text{-}B$-action $a\cdot \phi(1)B\cdot b:=\phi(a)Bb$. Note that ${}_\phi B$ belongs to $\rep(A,B)$.
\end{notation}

\subsection*{Square-zero extensions}
Let $\cA$ be a dg category and $\mathrm{B}$ a dg $\cA\text{-}\cA$-bimodule. The {\em square-zero extension $\cA\ltimes\! \mathrm{B}$ of $\cA$ by $\mathrm{B}$} is the dg category with the same objects as $\cA$ and complexes of $k$-vector spaces $(\cA\ltimes\! \mathrm{B})(x,y):=\cA(x,y)\oplus \mathrm{B}(y,x)$. Given morphisms $(f,f')\in (\cA\ltimes\! \mathrm{B})(x,y)$ and $(g,g') \in (\cA\ltimes\! \mathrm{B})(y,z)$, the composition $(g,g')\circ (f,f')$ is defined as $(g\circ f, g' f + g f')$.
\subsection*{Dg orbit categories}
Let $F\colon \cA\to \cA$ be an equivalence of dg categories. Following Keller \cite[\S5.1]{orbit}, the associated {\em dg orbit category $\cA/F^\bbZ$} has the same objects as $\cA$ and complexes of $k$-vector spaces $(\cA/F^\bbZ)(x,y):= \bigoplus_{n \in \bbZ} \cA(x,F^n(y))$. Given objects $x, y, z$ and morphisms $\mathrm{f}=\{f_n\}_{n \in \bbZ} \in \bigoplus_{n \in \bbZ} \cA(x,F^n(y))$ and $\mathrm{g}=\{g_n\}_{n \in \bbZ} \in \bigoplus_{n \in \bbZ} \cA(y,F^n(z))$, the $m^{\mathrm{th}}$-component of $\mathrm{g}\circ \mathrm{f}$ is defined as $\sum_n F^n(g_{m-n})\circ f_n$. When $\cA$ is a $k$-algebra $A$, the dg functor $F$ reduces to an isomorphism $\phi\colon A\stackrel{\sim}{\to} A$ and the dg orbit category $\cA/F^\bbZ$ to the skew Laurent polynomial algebra $A \rtimes_\phi \bbZ$.

Let us write $\cA/F^\bbN$ for the dg category with the same objects as $\cA$ and complexes of $k$-vector spaces $(\cA/F^\bbN)(x,y):=\bigoplus_{n \geq 0} \cA(x,F^n(y))$. The composition law is defined as above. By construction, we have a canonical dg functor $\cA/F^\bbN \to \cA/F^\bbZ$.

\section{Dg categories of idempotents}
Let $A$ be a (not necessarily unital) $k$-algebra.
\begin{definition}
The {\em dg category of idempotents of $A$}, denoted by $\underline{A}$, is defined as follows: the objects are the symbols ${\bf e}$ with $e$ an idempotent of $A$; the (complexes of) $k$-vector spaces $\underline{A}({\bf e},{\bf e'})$ are given by $eAe'$; the composition law is induced by the multiplication in $A$; the identity of the object ${\bf e}$ is the idempotent $e$. 
\end{definition}
\begin{notation}
Let $\mathrm{alg}(k)$ be the category of (not necessarily unital) $k$-algebras and (not necessarily unital) $k$-algebra homomorphisms. 
\end{notation}
Note that the preceding construction gives rise to the following functor:
\begin{eqnarray}\label{eq:functor-Alg}
\mathrm{alg}(k) \too \dgcat(k) & A\mapsto \underline{A} & \phi \mapsto \underline{\phi}\,.
\end{eqnarray} 
\begin{lemma}\label{lem:aux1}
The functor \eqref{eq:functor-Alg} preserves filtered colimits.
\end{lemma}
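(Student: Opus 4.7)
The strategy is to unravel the description of filtered colimits in $\dgcat(k)$ and appeal to the standard fact that filtered colimits of sets (equivalently, of $k$-modules) commute with finite limits. Given a filtered diagram $\{A_i\}_{i\in I}$ in $\mathrm{alg}(k)$ with colimit $A = \colim_i A_i$, filtered colimits in $\dgcat(k)$ are computed componentwise: the set of objects is $\colim_i \mathrm{Ob}(\underline{A_i})$, and for each pair of objects $\mathbf{e},\mathbf{e'}$ represented by idempotents in some common $A_l$, the hom-complex is the filtered colimit of $e_k A_k e'_k$ over $k \geq l$. Since a (not necessarily unital) $k$-algebra homomorphism sends idempotents to idempotents, the structure maps $\underline{A_i} \to \underline{A_j}$ are well-defined, and there is a canonical comparison dg functor
$$ \Psi \colon \colim_i \underline{A_i} \too \underline{A}\,.$$
The task is to check that $\Psi$ is an isomorphism of dg categories, i.e.\ bijective on objects and on hom-complexes.

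\textbf{Bijection on objects.} For surjectivity, any idempotent $a \in A$ is the image $\phi_i(a_i)$ of some element $a_i \in A_i$; the relation $a^2=a$ in $A$, combined with the filtered colimit property, yields a $j \geq i$ with $\phi_{ij}(a_i^2) = \phi_{ij}(a_i)$ in $A_j$, so $\phi_{ij}(a_i)$ is an idempotent in $A_j$ mapping to $a$. For injectivity, two idempotents $e \in A_i$ and $e' \in A_j$ represent the same object of $\colim_i \underline{A_i}$ if and only if they are identified in some $A_k$ with $k \geq i,j$, which is precisely the condition that they have the same image in $A$.

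\textbf{Bijection on hom-complexes.} Fix objects $\mathbf{e},\mathbf{e'}$, represented, after passing to a cofinal index, by idempotents $e_l, e'_l \in A_l$. Every element $a \in eAe'$ is the image of some $a_m \in A_m$ with $m \geq l$, and the identity $e\cdot \phi_m(a_m)\cdot e' = \phi_m(a_m)$ in $A$ forces the existence of $k \geq m$ such that $e_k\phi_{mk}(a_m)e'_k = \phi_{mk}(a_m)$ in $A_k$; this provides a preimage in $e_kA_ke'_k$. Conversely, if $a_k \in e_kA_ke'_k$ maps to $0$ in $A$, then $\phi_{km}(a_k) = 0$ in some $A_m$, so $a_k$ vanishes in the colimit. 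This shows that $\Psi$ induces an isomorphism $\colim_{k\geq l} e_k A_k e'_k \isoto eAe'$ on each hom-complex.

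\textbf{Main point.} There is no real obstacle—the entire argument is a formal verification. The only thing worth noting is that the two key defining equations, ``$a$ is an idempotent'' and ``$a$ lies in $eAe'$'', are both finite conjunctions of equalities in the algebra, and filtered colimits detect such equalities: if they hold in $A$, they already hold at some finite stage $A_k$. This is exactly what makes both the object assignment and the morphism-complex assignment commute with filtered colimits, establishing the lemma.
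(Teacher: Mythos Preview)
Your argument is correct and follows the same approach as the paper: both proofs verify that the comparison dg functor $\colim_i \underline{A_i} \to \underline{A}$ is an equivalence by lifting idempotents from $A$ to some $A_j$ via the filtered-colimit property, and then checking the hom-complexes match. Your version is simply more explicit—the paper compresses the fully-faithfulness and the injectivity on objects into a single sentence, whereas you spell out each step.
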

\begin{proof}
Consider a filtered diagram $\{A_i\}_{i \in I}$ in $\mathrm{alg}(k)$ with colimit $A$. Given an idempotent element $e$ of $A$, there exists an index $i' \in I$ and an idempotent $e_{i'} \in A_{i'}$ such that $e$ is the image of $e_{i'}$ under $A_{i'} \to A$. This implies that the induced dg functor $\mathrm{colim}_i \underline{A_i} \to \underline{A}$ is not only (essentially) surjective but also fully-faithful. 
\end{proof}
Given a unital $k$-algebra $A$ with unit $1$, let us write $\iota\colon A \to \underline{A}$ for the (unique) dg functor sending the single object of $A$ to the symbol ${\bf 1}$.
\begin{lemma}\label{lem:aux-new}
The dg functor $\iota$ is a derived Morita equivalence.
\end{lemma}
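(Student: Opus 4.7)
The plan is to verify the two standard conditions that characterize a derived Morita equivalence: full faithfulness at the level of Hom-complexes, together with the fact that the (essential) image of $\iota$ generates $\cD_c(\underline{A})$ as a thick subcategory (equivalently, generates $\cD(\underline{A})$ as a localizing subcategory).

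First I would check full faithfulness. The unique Hom-complex of the one-object dg category $A$ is $A$ itself, while $\underline{A}({\bf 1},{\bf 1})= 1\!\cdot\! A\!\cdot\! 1 = A$, and by construction $\iota$ acts as the identity on this complex. Hence $\iota$ is fully faithful.

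Next I would show that every representable right dg module $\underline{A}(-,{\bf e})$ is a direct summand of $\underline{A}(-,{\bf 1})$ in $\cD(\underline{A})$. Given an idempotent $e\in A$, consider the morphisms
\[
p\colon {\bf 1}\longrightarrow {\bf e},\qquad i\colon {\bf e}\longrightarrow {\bf 1}
\]
in $\underline{A}$ given, respectively, by the element $e\in 1\!\cdot\! A\!\cdot\! e = Ae = \underline{A}({\bf 1},{\bf e})$ and the element $e\in e\!\cdot\! A\!\cdot\! 1 = eA = \underline{A}({\bf e},{\bf 1})$. Their composite $p\circ i\in \underline{A}({\bf e},{\bf e}) = eAe$ is $e\cdot e = e = \id_{\bf e}$, while $i\circ p = e\in \End_{\underline{A}}({\bf 1}) = A$ is an idempotent endomorphism of ${\bf 1}$. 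Passing to the Yoneda embedding $\underline{A}\hookrightarrow \cD_c(\underline{A})$, this exhibits $\underline{A}(-,{\bf e})$ as the direct summand of $\underline{A}(-,{\bf 1})$ corresponding to the idempotent $e$ (with complementary summand corresponding to $1-e$).

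Finally, since every representable in $\cD_c(\underline{A})$ is a direct summand of $\underline{A}(-,{\bf 1})$, the object ${\bf 1}=\iota(\bullet)$ generates $\cD_c(\underline{A})$ as a Karoubian triangulated subcategory, and hence generates $\cD(\underline{A})$ as a localizing subcategory. Together with the full faithfulness established above, the standard criterion (see \cite[\S4.6]{ICM-Keller}) implies that the restriction functor $\iota^{\ast}\colon \cD(\underline{A})\to \cD(A)$ is an equivalence, \ie that $\iota$ is a derived Morita equivalence. I do not foresee any real obstacle; the only point that requires care is keeping track of the left/right conventions when checking that $i\circ p=e$ splits as a genuine idempotent endomorphism of the representable module, but this is immediate from the definition of $\underline{A}$.
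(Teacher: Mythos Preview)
Your proof is correct and follows exactly the same approach as the paper: you verify that $\iota$ is fully faithful and that each object ${\bf e}$ is a direct summand of ${\bf 1}$ via the morphisms given by $e$ in both directions. The paper's proof is simply a three-line compression of what you wrote; your added remarks about the Yoneda embedding and the generation criterion from \cite[\S4.6]{ICM-Keller} just make explicit what the paper leaves implicit.
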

\begin{proof}
Note first that the dg functor $\iota$ is fully-faithful. Given an idempotent element $e$ of $A$, the morphisms ${\bf 1} \stackrel{e}{\to} {\bf e}$ and ${\bf e} \stackrel{e}{\to} {\bf 1}$ present the object ${\bf e}$ as a direct summand of ${\bf 1}$. This allows us to conclude that $\iota$ is a derived Morita equivalence.
\end{proof}
\begin{remark}\label{rk:comparison-1}
Given a non-unital homomorphism $\phi\colon A \to B$ between unital $k$-algebras, note that ${}_\iota \underline{B} \circ {}_\phi B = {}_{\underline{\phi}} \underline{B} \circ {}_\iota \underline{A}$ in the homotopy category $\Hmo(k)$.
\end{remark}
Let $A$ be a unital $k$-algebra and $M_2(A)$ the associated $k$-algebra of $2\times 2$ matrices. Consider the following non-unital homomorphisms 
\begin{eqnarray*}
j_{1}, j_{2}\colon A \too M_2(A)&  a \mapsto \begin{pmatrix} a & 0 \\ 0& 0\end{pmatrix} & a \mapsto \begin{pmatrix} 0 & 0 \\ 0& a\end{pmatrix}\,.
\end{eqnarray*}
Note that if there exist elements $t_+,t_-$ of $A$ such that $t_-t_+=1$, then we can also consider the non-unital homomorphism $\phi^\pm\colon A \to A, a \mapsto t_+ a t_-$.
\begin{proposition}\label{prop:aux3}
\begin{itemize}
\item[(i)] The dg functors $\underline{j_{1}}$ and $\underline{j_{2}}$ are derived Morita equivalences. Moreover, their images in the homotopy category $\Hmo(k)$ are the same.
\item[(ii)] The dg functor $\underline{\phi^\pm}$ is a derived Morita equivalence. Moreover, its image in the homotopy category $\Hmo(k)$ is the identity morphism.
\end{itemize}
\end{proposition}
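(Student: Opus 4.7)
The plan is to reduce both parts to explicit computations with dg bimodules, combining Remark \ref{rk:comparison-1} with Lemma \ref{lem:aux-new}.

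For part (i), I would first note that since $j_i(1)=e_{ii}$, the associated bimodule ${}_{j_i}M_2(A)$ equals $e_{ii}M_2(A)$, which is precisely the classical $A\text{-}M_2(A)$-bimodule implementing the Morita equivalence between $A$ and $M_2(A)$; in particular, it represents an isomorphism in $\Hmo(k)$. Applying Remark \ref{rk:comparison-1} to $\phi=j_i$ gives the equality ${}_\iota \underline{M_2(A)} \circ {}_{j_i} M_2(A) = {}_{\underline{j_i}} \underline{M_2(A)} \circ {}_\iota \underline{A}$ in $\Hmo(k)$; combined with the fact that both copies of $\iota$ are isomorphisms in $\Hmo(k)$ (Lemma \ref{lem:aux-new}), this forces ${}_{\underline{j_i}}\underline{M_2(A)}$ to be an isomorphism too, so $\underline{j_i}$ is a derived Morita equivalence.

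To see that $\underline{j_1}$ and $\underline{j_2}$ coincide in $\Hmo(k)$, it suffices to exhibit an $A\text{-}M_2(A)$-bimodule isomorphism ${}_{j_1}M_2(A) \isoto {}_{j_2}M_2(A)$; I would take left multiplication by the elementary matrix $e_{21}$, with inverse given by left multiplication by $e_{12}$. Right $M_2(A)$-linearity is automatic, and compatibility with the left $A$-actions (via $j_1$ and $j_2$) reduces to the evident matrix identity $e_{21}\,j_1(a) = j_2(a)\,e_{21}$, where both sides equal the matrix with $a$ in position $(2,1)$ and zeros elsewhere.

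For part (ii), I would construct an isomorphism ${}_{\phi^{\pm}}A \cong A$ in $\rep(A,A)$ via the map $f\colon A \to t_+A = \phi^{\pm}(1)A$, $x\mapsto t_+x$. Right $A$-linearity is clear; left $A$-linearity follows from the computation $\phi^{\pm}(a)\cdot t_+x = t_+at_-t_+x = t_+ax$ using $t_-t_+=1$; bijectivity is forced by the same relation, with inverse $y \mapsto t_-y$. Hence ${}_{\phi^{\pm}}A$ represents the identity morphism of $A$ in $\Hmo(k)$. Applying Remark \ref{rk:comparison-1} to $\phi=\phi^{\pm}$ and cancelling the isomorphism ${}_\iota \underline{A}$ then yields ${}_{\underline{\phi^{\pm}}}\underline{A} = \id$ in $\Hmo(k)$, which simultaneously shows that $\underline{\phi^{\pm}}$ is a derived Morita equivalence and identifies its image in $\Hmo(k)$.

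The main obstacle is essentially bookkeeping: handling the non-unital nature of $j_1, j_2, \phi^{\pm}$ correctly when passing to bimodules and composing in $\Hmo(k)$. The concrete form of all the bimodules involved, together with the equivalence between $\Hom_{\Hmo(k)}(-,-)$ and isomorphism classes in $\rep(-,-)$ recalled in \S\ref{sub:dg}, should make this routine.
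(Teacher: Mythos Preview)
Your argument for part~(i) is essentially the paper's: both reduce via Lemma~\ref{lem:aux-new} and Remark~\ref{rk:comparison-1} to showing that ${}_{j_i}M_2(A)$ is invertible in $\Hmo(k)$, and then identify ${}_{j_1}M_2(A)$ with ${}_{j_2}M_2(A)$ by the obvious row-swap (your left multiplication by $e_{21}$ is exactly the paper's map $\begin{pmatrix} a & b \\ 0 & 0\end{pmatrix}\mapsto\begin{pmatrix} 0 & 0 \\ a & b\end{pmatrix}$).

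For part~(ii) your route is correct but genuinely different from the paper's. You exhibit directly an $A\text{-}A$-bimodule isomorphism $A\stackrel{\sim}{\to}{}_{\phi^\pm}A$, $x\mapsto t_+x$, using only $t_-t_+=1$; this is the cleanest possible argument and immediately gives ${}_{\phi^\pm}A=\id$ in $\Hmo(k)$, after which Remark~\ref{rk:comparison-1} finishes as you say. The paper instead runs the classical matrix stabilization trick from operator/algebraic $K$-theory: it lifts $\phi^\pm$ to $\varphi^\pm\colon M_2(A)\to M_2(A)$, conjugation by $\mathrm{diag}(t_+,1)$ and $\mathrm{diag}(t_-,1)$, observes $j_1\circ\phi^\pm=\varphi^\pm\circ j_1$ and $\varphi^\pm\circ j_2=j_2$, and then invokes part~(i) ($\underline{j_1}=\underline{j_2}$ in $\Hmo(k)$) to squeeze out that $\underline{\phi^\pm}$ is the identity. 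Your approach is more elementary and self-contained; the paper's has the virtue of being the standard ``rotation'' argument that works uniformly in settings where one does not have such direct access to bimodules, and it makes part~(ii) a formal corollary of part~(i).
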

\begin{proof}
(i) Recall first that a dg functor $F\colon \cA\to \cB$ is a derived Morita equivalence if and only if its image ${}_F\cB$ in the homotopy category $\Hmo(k)$ is invertible. Thanks to Lemma \ref{lem:aux-new} and Remark \ref{rk:comparison-1}, it suffices then to show that the $A\text{-}M_2(A)$-bimodules ${}_{j_{1}} M_2(A)$ and ${}_{j_{2}}M_2(A)$ are invertible in $\Hmo(k)$. Note that their inverses are given by the $M_2(A)\text{-}A$-bimodules $M_2(A)j_{1}(1)$ and $M_2(A)j_{2}(1)$, respectively. This shows the first claim. The second claim follows from the isomorphism ${}_{j_{1}} M_2(A) \stackrel{\sim}{\to} {}_{j_{2}} M_2(A)$ of $A\text{-}M_2(A)$-bimodules given by $\begin{pmatrix} a& b \\ 0 & 0\end{pmatrix} \mapsto \begin{pmatrix} 0& 0 \\ a & b\end{pmatrix}$.

(ii) Consider the following non-unital homomorphism
\begin{eqnarray*}
\varphi^\pm\colon M_2(A) \too M_2(A) && \begin{pmatrix} a& b \\ c & d\end{pmatrix} \mapsto \begin{pmatrix} t_+& 0 \\ 0 & 1\end{pmatrix} \begin{pmatrix} a& b \\ c & d\end{pmatrix} \begin{pmatrix} t_-& 0 \\ 0 & 1\end{pmatrix}\,.
\end{eqnarray*}
Note that $j_1\circ \phi^\pm = \varphi^\pm \circ j_1$ and $\varphi^\pm \circ j_2 = j_2$ in the category $\mathrm{alg}(k)$. By applying the functor \eqref{eq:functor-Alg}, we hence conclude from item (i) that the dg functor $\underline{\phi^\pm}$ is not only a derived Morita equivalence but moreover that its image in the homotopy category $\Hmo(k)$ is the identity morphism.
\end{proof}
\section{Proof of Theorem \ref{thm:main1}}\label{sec:proof1}
Consider the sequential colimit diagram $A\stackrel{\phi}{\to} A \stackrel{\phi}{\to} \cdots \to C$ in the category $\mathrm{alg}(k)$. Note that the $k$-algebra $C$ is non-unital and that the homomorphism $\phi$ gives rise to an isomorphism $\hat{\phi}\colon C \stackrel{\sim}{\to}C$. Let us denote by $C \rtimes_{\hat{\phi}}\bbN$, resp. $C \rtimes_{\hat{\phi}}\bbZ$, the associated skew polynomial algebra, resp. skew Laurent polynomial algebra. Note also that $\phi$ extends to (non-unital) homomorphisms $\phi\colon A[t_+,;\phi] \to A[t_+;\phi], a \mapsto t_+ a t_-$, and $\phi^\pm\colon A[t_+,t_-;\phi] \to A[t_+,t_-;\phi], a \mapsto t_+ a t_-$  Under these notations, we have the following sequential colimit diagrams:
\begin{equation}
A[t_+;\phi] \stackrel{\phi}{\too} A[t_+;\phi] \stackrel{\phi}{\too} \cdots \too C\rtimes_{\hat{\phi}}\bbN \label{eq:seq-1}
\end{equation}
\begin{equation}\label{eq:seq-2}
A[t_+,t_-;\phi] \stackrel{\phi^\pm}{\too} A[t_+,t_-;\phi] \stackrel{\phi^\pm}{\too} \cdots \too C\rtimes_{\hat{\phi}}\bbZ\,.
\end{equation}
\begin{lemma}\label{lem:new1}
The dg category of idempotents of $C\rtimes_{\hat{\phi}}\bbN$, resp. $C\rtimes_{\hat{\phi}}\bbZ$, is derived Morita equivalent to the dg category $\underline{C}/\underline{\hat{\phi}}^\bbN$, resp. $\underline{C}/\underline{\hat{\phi}}^\bbZ$.
\end{lemma}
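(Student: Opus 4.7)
The plan is to construct an explicit comparison dg functor
$$F\colon \underline{C}/\underline{\hat{\phi}}^\bbN \longrightarrow \underline{C\rtimes_{\hat{\phi}}\bbN}$$
and verify that it is a derived Morita equivalence; the $\bbZ$ case is entirely parallel. On objects, I send a symbol $\mathbf{e}$ (with $e$ an idempotent of $C$) to the same symbol in $\underline{C\rtimes_{\hat{\phi}}\bbN}$ via the canonical inclusion $C\hookrightarrow C\rtimes_{\hat{\phi}}\bbN$. On morphism complexes, I send a component $f_n \in eC\hat{\phi}^n(e')$ of
$$(\underline{C}/\underline{\hat{\phi}}^\bbN)(\mathbf{e},\mathbf{e'})=\bigoplus_{n\geq 0} eC\hat{\phi}^n(e')$$
to $f_n t_+^n \in e(C\rtimes_{\hat{\phi}}\bbN)e'$.

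The first step is to verify that $F$ is a dg functor: using the iterated skew relation $t_+^n b=\hat{\phi}^n(b)\,t_+^n$, the $t_+^p$-coefficient of the product $\bigl(\sum_n f_n t_+^n\bigr)\bigl(\sum_m g_m t_+^m\bigr)$ equals $\sum_n f_n\hat{\phi}^n(g_{p-n})$, which matches precisely the $p$-th component of Keller's dg orbit-category composition (once one unfolds that composition of arrows in $\underline{C}$ corresponds to reverse multiplication in $C$). The second step is to establish full faithfulness: sandwiching an arbitrary element $\sum_n a_n t_+^n$ between $e$ and $e'$ and using $t_+^n e'=\hat{\phi}^n(e')t_+^n$ produces the decomposition $e(C\rtimes_{\hat{\phi}}\bbN)e'=\bigoplus_{n\geq 0} eC\hat{\phi}^n(e')\cdot t_+^n$, matching the source hom-space summand by summand.

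The third and main step is to upgrade full faithfulness to a derived Morita equivalence, which amounts to showing that the representables $\{h_{\mathbf{e}} : e\in C\text{ idempotent}\}$ generate $\cD_c(\underline{C\rtimes_{\hat{\phi}}\bbN})$ as a thick subcategory. The key observation is that $C$ admits an increasing chain of local unit idempotents $u_k\in C$, $k\geq 0$, arising as the images of $1_A$ from the successive copies of $A$ in the defining sequential colimit, and satisfying $\hat{\phi}(u_k)=u_{k-1}$. Given any idempotent $p=\sum_{n=0}^N p_n t_+^n$ of $C\rtimes_{\hat{\phi}}\bbN$, each $p_n$ is dominated by some $u_{K_n}$, and then for $m\geq\max_n(K_n+n)$ the inequalities $u_m\geq u_{K_n}$ and $\hat{\phi}^n(u_m)=u_{m-n}\geq u_{K_n}$ together force $u_m p = p u_m = p$. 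Thus $p$ is an idempotent in the unital corner $u_m(C\rtimes_{\hat{\phi}}\bbN)u_m$ and $u_m-p$ is its orthogonal complement, so the representable $h_{\mathbf{p}}$ is a direct summand of $h_{\mathbf{u_m}}$, which lies in the image of $F$ since $u_m\in C$.

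The main obstacle lies in this third step, namely producing sufficiently many idempotents of $C$ to dominate arbitrary idempotents of the larger algebra $C\rtimes_{\hat{\phi}}\bbN$; the local-unit argument above handles it. For the $\bbZ$ variant, $F$ is defined by the same object assignment and by $f_n\mapsto f_n t_+^n$ for $n\geq 0$ and $f_n\mapsto f_n t_-^{-n}$ for $n<0$ (using $t_-^m a=\hat{\phi}^{-m}(a)t_-^m$), and the dominating index $m$ is now chosen large enough to control a finite sum $\sum_{n=-N}^{N}p_n t_+^n$ in both directions, which again reduces to a finite maximum.
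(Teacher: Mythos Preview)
Your proof is correct and follows essentially the same approach as the paper's: both identify the full dg subcategory of $\underline{C\rtimes_{\hat{\phi}}\bbN}$ (resp.\ $\bbZ$) on idempotents coming from $C$ with the dg orbit category via the map $f_n\mapsto f_n t_+^n$, and both show every idempotent of the skew algebra is a direct summand of one of the local units coming from $C$. The paper phrases the latter more tersely, using that any idempotent arises from a finite stage of the sequential colimit \eqref{eq:seq-1}--\eqref{eq:seq-2} and is hence dominated by the image $e_n$ of the unit at that stage (which coincides with your $u_n$); your explicit computation $u_m p=pu_m=p$ for $m\geq\max_n(K_n+n)$ unpacks exactly this.
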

\begin{proof}
We focus ourselves in the algebra $C \rtimes_{\hat{\phi}} \bbZ$ and in the dg orbit category~$\underline{C}/\underline{\hat{\phi}}^\bbZ$; the proof of the other case is similar. Let us denote by $1_n$ the unit of the $n^{\mathrm{th}}$ copy of $A[t_+,t_-;\phi]$ and by $e_n$ the image of $1_n$ under the induced homomorphism $A[t_+,t_-;\phi] \to C \rtimes_{\hat{\phi}}\bbZ$. Given an idempotent element $e$ of $C \rtimes_{\hat{\phi}}\bbZ$, there exists an integer $n \gg 0$ such that ${\bf e}$ is a direct summand of ${\bf e_n}$. Since $e_n$ belongs to $C\subset C\rtimes_{\hat{\phi}}\bbZ$, this allows us to conclude, in particular, that the dg category of idempotents of $C \rtimes_{\hat{\phi}}\bbZ$ is derived Morita equivalent to its full dg category of symbols ${\bf e}$ with $e$ an idempotent element of $C\subset C\rtimes_{\hat{\phi}}\bbZ$. Given any two such symbols ${\bf e}$ and ${\bf e'}$, note that we have the following equalities of (complexes of) $k$-vector spaces:
\begin{eqnarray*}
(\underline{C \rtimes_{\hat{\phi}}\bbZ})({\bf e}, {\bf e'}) & := & e(C \rtimes_{\hat{\phi}}\bbZ)e' \\
& = & \bigoplus_{n \in \bbZ} \,eC \hat{\phi}^n(e') \\
 & = &  \bigoplus _{n \in \bbZ} \,\underline{C}({\bf e}, \underline{\hat{\phi}}^n({\bf e'})) =: (\underline{C}/\underline{\hat{\phi}}^\bbZ)({\bf e},{\bf e'})\,.
\end{eqnarray*}
Under these equalities, the composition law of the dg category of idempotents of $C \rtimes_{\hat{\phi}}\bbZ$ corresponds to the composition law of the dg orbit category $\underline{C}/\underline{\hat{\phi}}^\bbZ$. 
\end{proof}
By combining Lemmas \ref{lem:aux1}-\ref{lem:aux-new} with Remark \ref{rk:comparison-1}, we conclude from Lemma \ref{lem:new1} that \eqref{eq:seq-1}-\eqref{eq:seq-2} give rise to the following sequential (homotopy) colimit diagrams
\begin{equation}\label{eq:seq-11}
A[t_+;\phi] \stackrel{{}_{\phi}A[t_+;\phi]}{\too}  A[t_+;\phi] \stackrel{{}_{\phi}A[t_+;\phi]}{\too} \cdots \too \underline{C}/\underline{\hat{\phi}}^\bbN
\end{equation}
\begin{equation}\label{eq:seq-22}
A[t_+,t_-;\phi] \stackrel{{}_{\phi^\pm}A[t_+,t_-;\phi]}{\too} A[t_+,t_-;\phi] \stackrel{{}_{\phi^\pm}A[t_+,t_-;\phi]}{\too} \cdots \too \underline{C}/\underline{\hat{\phi}}^\bbZ
\end{equation}
in the homotopy category $\Hmo(k)$. Proposition \ref{prop:aux3}(ii) (with $A[t_+,t_-;\phi]$ instead of $A$) leads then automatically to the following result:
\begin{proposition}\label{prop:new}
The (transfinite) composition \eqref{eq:seq-22} is an isomorphism. Consequently, the dg categories $A[t_+,t_-;\phi]$ and $\underline{C}/\underline{\hat{\phi}}^\bbZ$ are derived Morita equivalent.
\end{proposition}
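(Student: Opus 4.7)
The plan is to show that every transition morphism in the sequential diagram \eqref{eq:seq-22} equals the identity of $A[t_+,t_-;\phi]$ in the homotopy category $\Hmo(k)$; once this is established, the (transfinite) composition into the sequential homotopy colimit is automatically an isomorphism, and hence $A[t_+,t_-;\phi]$ becomes derived Morita equivalent to $\underline{C}/\underline{\hat{\phi}}^\bbZ$, which by Lemma \ref{lem:new1} (combined with Lemmas \ref{lem:aux1}--\ref{lem:aux-new}) is the common target of the diagram.

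The key input is Proposition \ref{prop:aux3}(ii), applied not to $A$ but to the unital $k$-algebra $A[t_+,t_-;\phi]$. This is legitimate because the latter contains, by construction, elements $t_+ \in (A[t_+,t_-;\phi])_1$ and $t_- \in (A[t_+,t_-;\phi])_{-1}$ satisfying $t_-t_+=1$, so that the non-unital homomorphism $\phi^\pm(a):=t_+at_-$ is defined on $A[t_+,t_-;\phi]$ itself. Proposition \ref{prop:aux3}(ii) then asserts that the induced dg functor $\underline{\phi^\pm}\colon\underline{A[t_+,t_-;\phi]}\to\underline{A[t_+,t_-;\phi]}$ is a derived Morita equivalence whose image in $\Hmo(k)$ is the identity morphism.

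To transfer this information back to the bimodule ${}_{\phi^\pm}A[t_+,t_-;\phi]$ labelling the transitions in \eqref{eq:seq-22}, I would invoke Lemma \ref{lem:aux-new} and Remark \ref{rk:comparison-1}. The former gives a derived Morita equivalence $\iota\colon A[t_+,t_-;\phi]\to\underline{A[t_+,t_-;\phi]}$ (hence an isomorphism ${}_\iota\underline{A[t_+,t_-;\phi]}$ in $\Hmo(k)$), while the latter yields the identity
\[
{}_\iota\underline{A[t_+,t_-;\phi]}\circ{}_{\phi^\pm}A[t_+,t_-;\phi] \;=\; {}_{\underline{\phi^\pm}}\underline{A[t_+,t_-;\phi]}\circ{}_\iota\underline{A[t_+,t_-;\phi]}
\]
in $\Hmo(k)$. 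Cancelling the invertible factor ${}_\iota\underline{A[t_+,t_-;\phi]}$ shows that ${}_{\phi^\pm}A[t_+,t_-;\phi]$ is conjugate to ${}_{\underline{\phi^\pm}}\underline{A[t_+,t_-;\phi]}$, which is the identity by Proposition \ref{prop:aux3}(ii). Hence each transition in \eqref{eq:seq-22} is the identity of $A[t_+,t_-;\phi]$ in $\Hmo(k)$.

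Having arranged that \eqref{eq:seq-22} is (in $\Hmo(k)$) a sequential diagram with identity transitions, the sequential homotopy colimit is canonically isomorphic to any of its terms, and the structural morphism into the colimit — which is precisely the composition \eqref{eq:seq-22} — is an isomorphism. The derived Morita equivalence $A[t_+,t_-;\phi]\simeq\underline{C}/\underline{\hat{\phi}}^\bbZ$ follows at once. I do not anticipate a genuine obstacle here: all the nontrivial work is already absorbed into Lemma \ref{lem:new1} and Proposition \ref{prop:aux3}(ii), and what remains is the formal assembly above; the only point that needs emphasis is that Proposition \ref{prop:aux3}(ii) is applied to $A[t_+,t_-;\phi]$ rather than to $A$, which is precisely what allows the $(t_+,t_-)$-sandwich to play the role of the abstract pair in that earlier lemma.
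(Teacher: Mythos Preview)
Your proposal is correct and follows exactly the approach the paper takes: the paper's entire argument is the single remark that Proposition~\ref{prop:aux3}(ii), applied with $A[t_+,t_-;\phi]$ in place of $A$, yields the result. You have faithfully unpacked what that one sentence means, including the use of Lemma~\ref{lem:aux-new} and Remark~\ref{rk:comparison-1} to translate the statement about $\underline{\phi^\pm}$ into the statement that the transition bimodule ${}_{\phi^\pm}A[t_+,t_-;\phi]$ is the identity in $\Hmo(k)$.
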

Now, consider the square-zero extension $\underline{C}[\epsilon]:=\underline{C}\ltimes\! ({}_{\underline{\hat{\phi}}}\underline{C}[1])$ of $\underline{C}$ by the suspension ${}_{\underline{\hat{\phi}}} \underline{C} [1]$ of the dg $\underline{C}\text{-}\underline{C}$-bimodule ${}_{\underline{\hat{\phi}}} \underline{C}$ associated to $\underline{\hat{\phi}}$. Consider also the dg $\underline{C}[\epsilon]\text{-}(\underline{C}/\underline{\hat{\phi}}^\bbN)$-bimodule $\hat{\mathrm{B}}$ introduced in \cite[\S4]{orbit}; denoted by $\mathrm{B}'$ in {\em loc. cit.}
\begin{lemma}\label{lem:new2}
We have the following sequential (homotopy) colimit diagram
$$ A[\epsilon] \stackrel{{}_\phi A[\epsilon]}{\too} A[\epsilon] \stackrel{{}_\phi A[\epsilon]}{\too} \cdots \too \underline{C}[\epsilon]$$
in the homotopy category $\Hmo(k)$.
\end{lemma}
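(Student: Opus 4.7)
The plan is to mirror, in the setting of square-zero extensions, the argument already used to produce the sequential homotopy colimit diagrams \eqref{eq:seq-11}-\eqref{eq:seq-22}. I would begin by noting that the square-zero extension is functorial for corner-data preserving (possibly non-unital) homomorphisms: given such a homomorphism $\psi\colon B_1 \to B_2$, one obtains an induced morphism $B_1[\epsilon] \to B_2[\epsilon]$ in $\Hmo(k)$, represented by the bimodule ${}_\psi B_2[\epsilon]$. Applied to the sequential diagram $A \stackrel{\phi}{\to} A \stackrel{\phi}{\to} \cdots \to C$ (with the isomorphism $\hat\phi$ on the colimit), this produces the candidate diagram $A[\epsilon]\to A[\epsilon]\to\cdots\to \underline{C}[\epsilon]$ in $\Hmo(k)$ with the prescribed transition bimodules ${}_\phi A[\epsilon]$.

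Next, I would lift the diagram through the dg category of idempotents construction. By Lemma \ref{lem:aux-new} the dg functor $\iota\colon A \to \underline{A}$ is a derived Morita equivalence, and I claim that its natural extension $\iota[\epsilon]\colon A[\epsilon] \to \underline{A}\ltimes\!({}_{\underline{\phi}}\underline{A}[1]) =: \underline{A}[\epsilon]$ is also one: a dg module over a square-zero extension $\cA\ltimes\! M$ is equivalent data to a dg $\cA$-module $N$ together with a square-zero structure map $N\otimes_\cA M \to N[1]$, and this package is manifestly invariant under derived Morita equivalences of $\cA$ that carry the bimodule $M$ correspondingly. Combining this with Remark \ref{rk:comparison-1} applied to $\phi$, I would identify the original diagram, in $\Hmo(k)$, with $\underline{A}[\epsilon]\to\underline{A}[\epsilon]\to\cdots$ whose transitions are induced by $\underline{\phi}$.

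Finally, I would compute this simplified colimit directly. Lemma \ref{lem:aux1} already gives $\mathrm{colim}_i\, \underline{A}= \underline{C}$ on the underlying dg categories; the bimodule parts ${}_{\underline{\phi}}\underline{A}[1]$ then assemble into ${}_{\underline{\hat\phi}}\underline{C}[1]$ by essentially the same filtered-colimit argument, since both the pullback-bimodule construction ${}_{(-)}(-)$ and the shift $(-)[1]$ commute with sequential filtered colimits. The two ingredients combine, on the nose, into $\underline{C}\ltimes\!({}_{\underline{\hat\phi}}\underline{C}[1]) = \underline{C}[\epsilon]$, yielding the desired homotopy colimit in $\Hmo(k)$. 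The main obstacle will be the derived Morita invariance claim in the second paragraph; I expect to handle it either through the dg-module description sketched there, or by writing down explicitly the dg bimodule representing $\iota[\epsilon]$ together with a quasi-inverse and verifying invertibility in $\Hmo(k)$ by hand.
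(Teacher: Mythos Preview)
Your proposal is correct and follows essentially the same three-step route as the paper: pass from $A[\epsilon]$ to $\underline{A}[\epsilon]$ via a derived Morita equivalence, compute the filtered colimit of the $\underline{A}[\epsilon]$'s as $\underline{C}[\epsilon]$ by the argument of Lemma~\ref{lem:aux1}, and identify the transition morphisms. The one place where you over-engineer is your ``main obstacle'': the paper simply notes that the dg functor $A[\epsilon]\to\underline{A}[\epsilon]$ sending the single object to $\mathbf{1}$ is a derived Morita equivalence by exactly the same direct-summand argument as in Lemma~\ref{lem:aux-new} (the objects of $\underline{A}[\epsilon]$ are those of $\underline{A}$, and each $\mathbf{e}$ is still a retract of $\mathbf{1}$), so no abstract analysis of modules over square-zero extensions is needed.
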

\begin{proof}
Thanks to Lemma \ref{lem:aux1}, we have $\underline{A} \stackrel{\underline{\phi}}{\to} \underline{A} \stackrel{\underline{\phi}} \to \cdots \to \underline{C}$
in the category $\dgcat(k)$. Consider the square-zero extension $\underline{A}[\epsilon]:=\underline{A}\ltimes\! ({}_{\underline{\phi}}\underline{A}[1])$ of $\underline{A}$ by the suspension ${}_{\underline{\phi}} \underline{A} [1]$ of the dg $\underline{A}\text{-}\underline{A}$-bimodule ${}_{\underline{\phi}} \underline{A}$ associated to $\underline{\phi}$. Similarly to the proof of Lemma \ref{lem:aux1}, we have the following induced sequential colimit diagram 
$$\underline{A}[\epsilon] \stackrel{\underline{\phi}}{\too} \underline{A}[\epsilon] \stackrel{\underline{\phi}}{\too} \cdots \too \underline{C}[\epsilon]\,.$$ 
Note that the dg functor $A[\epsilon] \to \underline{A}[\epsilon]$ sending the single object $A[\epsilon]$ to the symbol ${\bf 1}$, where $1$ is the unit of $A$, is a derived Morita equivalence. Under such derived Morita equivalence the dg functor $\underline{\phi}\colon \underline{A}[\epsilon] \to \underline{A}[\epsilon]$ corresponds to the morphism ${}_\phi A[\epsilon]\colon A[\epsilon] \to A[\epsilon]$ in the homotopy category $\Hmo(k)$. This concludes the proof.
\end{proof}
By combining Lemma \ref{lem:new2} with the sequential (homotopy) colimit diagram \eqref{eq:seq-11}, we obtain the following sequential (homotopy) colimit diagram
\begin{equation}\label{eq:diagram-big}
\xymatrix@C=2.5em@R=1.5em{
\underline{C}[\epsilon] \ar[rr]^-{\hat{\mathrm{B}}} && \underline{C}/\underline{\hat{\phi}}^\bbN \\
\vdots \ar[u] && \vdots \ar[u] \\
A[\epsilon] \ar[u]^-{{}_\phi A[\epsilon]} \ar[rr]^-{\mathrm{B}} && A[t_+;\phi] \ar[u]_-{{}_\phi A[t_+;\phi]} \\
A[\epsilon] \ar[u]^-{{}_\phi A[\epsilon]}  \ar[rr]^-{\mathrm{B}} && A[t_+;\phi] \ar[u]_-{{}_\phi A[t_+;\phi]}
}
\end{equation}
in the homotopy category $\Hmo(k)$. As explained in \cite[\S4]{orbit}, given any localizing invariant $E$, we have a distinguished triangle
\begin{equation}\label{eq:triangle-key}
E(\underline{C}[\epsilon])\stackrel{E(\hat{\mathrm{B}})}{\too} E(\underline{C}/\underline{\hat{\phi}}^\bbN) \too E(\underline{C}/\underline{\hat{\phi}}^\bbZ) \stackrel{\partial}{\too} \Sigma E(\underline{C}[\epsilon])\,.
\end{equation}
Therefore, by combining the diagram \eqref{eq:diagram-big} with Proposition \ref{prop:new}, we obtain the searched homotopy colimit diagram \eqref{eq:sequence}. This concludes the proof of the~first~claim.

We now prove the second claim. Let $E$ be an $\bbA^1$-homotopy invariant. As explained in \cite[Prop.~4.6]{orbit}, $E(\mathrm{B})$ reduces to $\id -E({}_\phi A)\colon E(A) \to E(A)$. Similarly, the morphism $E(\hat{\mathrm{B}})$ reduces to $\id- E({}_{\underline{\hat{\phi}}} \underline{C})\colon E(\underline{C}) \to E(\underline{C})$. Therefore, making use of Lemma \ref{lem:aux-new} and of Remark \ref{rk:comparison-1}, we observe that by applying the functor $E$ to \eqref{eq:diagram-big} we obtain (up to isomorphism) the following sequential colimit diagram:
\begin{equation}\label{eq:diagram-big2}
\xymatrix@C=2.5em@R=1.5em{
E(\underline{C})\ar[rr]^-{\id - E(\underline{\hat{\phi}})} && E(\underline{C}) \\
\vdots \ar[u] && \vdots \ar[u] \\
E(\underline{A}) \ar[u]^-{E(\underline{\phi})} \ar[rr]^-{\id - E(\underline{\phi})} && E(\underline{A})  \ar[u]_-{E(\underline{\phi})} \\
E(\underline{A})  \ar[u]^-{E(\underline{\phi})}  \ar[rr]^-{\id - E(\underline{\phi})} && E(\underline{A})  \ar[u]_-{E(\underline{\phi})}\,.
}
\end{equation}
We now claim that the induced (transfinite) composition
\begin{equation}\label{eq:hocofib}
\mathrm{hocofib}(\id - E(\underline{\phi})) \stackrel{E(\underline{\phi})}{\too} \mathrm{hocofib}(\id - E(\underline{\phi})) \stackrel{E(\underline{\phi})}{\too} \cdots \too \mathrm{hocofib}(\id -E(\underline{\hat{\phi}})) \end{equation}
is an isomorphism.
As explained in \cite[Thm.~8.25]{book}, the functor $U_\loc^{\bbA^1}$ is the initial $\bbA^1$-homotopy invariant. Therefore, it suffices to prove the latter claim in the particular case where $E=U^{\bbA^1}_\loc$. By construction, we have a factorization 
$$ U_\loc^{\bbA^1}\colon \dgcat(k) \stackrel{U_\add}{\too} \Mot_\add(k) \stackrel{\gamma}{\too} \Mot_\loc^{\bbA^1}(k)\,,$$
where $\Mot_\add(k)$ is a certain compactly generated triangulated category of noncommutative mixed motives, $U_\add$ is a certain functor sending sequential (homotopy) colimits to sequential homotopy colimits, and $\gamma$ is a certain homotopy colimit preserving functor; consult \cite[\S 8.4.2]{book} for details. The triangulated category $\Mot_\add(k)$ is moreover enriched over spectra; we write $\Hom_\Spt(-,-)$ for this enrichment. Let $N\!\!M$ be a compact object of $\Mot_\add(k)$. In order to prove our claim, it is then enough to show that the (transfinite) composition obtained by applying the functor $\Hom_\Spt(N\!\!M,-)$ to \eqref{eq:hocofib} (with $E=U_\add$) is an isomorphism.
%
%
Since the spectrum $\Hom_\Spt(N\!\!M, U_\add(\underline{C}))$ is the sequential homotopy colimit of $\Hom_\Spt(N\!\!M, U_\add(\underline{A}))$, with respect to the transition morphism(s) $\Hom_\Spt(N\!\!M, U_\add(\underline{\phi}))$, the proof follows now automatically from the general result \cite[Lem.~3.3]{Munster} concerning spectra. This finishes the proof of Theorem \ref{thm:main1}.
\section{Proof of the generalization of Theorem \ref{thm:main1}}\label{sub:generalization}
The triangulated category $\Mot_\loc(k)$ carries a symmetric monoidal structure making the functor $U_\loc$ symmetric monoidal; see \cite[\S8.3.1]{book}. Therefore, the distinguished triangle \eqref{eq:triangle-key} (with $E=U_\loc$) gives rise to the distinguished triangle:
$$
U_\loc(\cA\otimes \underline{C}[\epsilon]) \stackrel{U_\loc({}_{\id}\cA\otimes \hat{\mathrm{B}})}{\too} U_\loc(\cA\otimes \underline{C}/\underline{\hat{\phi}}^\bbN) \too U_\loc(\cA\otimes \underline{C}/\underline{\hat{\phi}}^\bbZ) \stackrel{\partial}{\too} \Sigma U_\loc(\cA\otimes \underline{C}[\epsilon])\,.
$$
Since the functor $\cA\otimes -$ preserves (sequential) homotopy colimits, the combination of the preceding triangle with the commutative diagram \eqref{eq:diagram-big} and with Proposition \ref{prop:new} leads then to the following sequential homotopy colimit diagram
$$ \mathrm{hocofib}\,U_\loc({}_{\id} \cA \otimes \mathrm{B}) \too \mathrm{hocofib}\,U_\loc({}_{\id} \cA \otimes \mathrm{B}) \too \cdots \too U_\loc(\cA\otimes A[t_+,t_-;\phi])\,,$$
where the transition morphism(s) is induced by the corner isomorphism $\phi$. The proof of the first claim follows now automatically from the fact that $U_\loc$ is the initial localizing invariant; see \cite[Thm.~8.5]{book}.

The triangulated category $\Mot^{\bbA^1}_\loc(k)$ carries a symmetric monoidal structure making the functor $U_\loc^{\bbA^1}$ symmetric monoidal; see \cite[\S8.5.2]{book}. Therefore, the distinguished triangle \eqref{eq:triangle-searched} (with $E=U_\loc^{\bbA^1}$) gives rise to the distinguished triangle:
$$ U_\loc^{\bbA^1}(\cA\otimes A) \stackrel{\id - U_\loc^{\bbA^1}({}_{\id}\cA \otimes_{}\phi A)}{\too} U_\loc^{\bbA^1}(\cA\otimes A) \too U_\loc^{\bbA^1}(\cA\otimes A[t_+,t_-;\phi]) \stackrel{\partial}{\too} \Sigma U_\loc^{\bbA^1}(\cA\otimes A)\,.$$
The proof of the second claim follows now automatically from the fact that $U^{\bbA^1}_\loc$ is the initial $\bbA^1$-homotopy invariant; see \cite[Thm.~8.25]{book}.
\section{Proof of Theorem \ref{thm:main2}}\label{sec:proof2}
Similarly to the arguments used in \S\ref{sub:generalization}, it suffices to prove Theorem \ref{thm:main2} in the particular case where $E=U_\loc^{\bbA^1}$ and $\cA=k$. As mentioned in Example \ref{ex:Leavitt-path}, the Leavitt path algebra $L:=L_Q$ is a corner skew Laurent polynomial algebra. Let $L_0$ be the homogeneous component of degree $0$ and $\phi\colon L_0 \stackrel{\sim}{\to} eL_0e$ the ``corner'' isomorphism. Thanks to Theorem \ref{thm:main1} (with $E=U_\loc^{\bbA^1}$), we have a triangle
\begin{equation}\label{eq:triangle-last}
U_\loc^{\bbA^1}(L_0) \stackrel{\id -U_\loc^{\bbA^1}({}_\phi L_0)}{\too} U_\loc^{\bbA^1}(L_0) \too U_\loc^{\bbA^1}(L_Q) \stackrel{\partial}{\too} \Sigma U_\loc^{\bbA^1}(L_0)
\end{equation}
in the category $\Mot_\loc^{\bbA^1}(k)$. Following Ara-Brustenga-Corti\~nas \cite[\S5]{Munster}, the $k$-algebra $L_0$ admits a ``length'' filtration $L_0=\bigcup_{n=0}^\infty L_{0,n}$. Concretely, $L_{0,n}$ is the $k$-linear span of the elements of the form $\sigma \varsigma^\ast$, where $\sigma$ and $\varsigma$ are paths such that $r(\sigma)=r(\varsigma)$ and $\mathrm{deg}(\sigma)=\mathrm{deg}(\varsigma)=n$. It turns out that the $k$-algebra $L_{0,n}$ is isomorphic to the product of $(n+1)v' + (v-v')$ matrix algebras with $k$-coefficients. Making use of the (derived) Morita equivariance between a matrix algebra with $k$-coefficients and $k$, we hence conclude that $U_\loc^{\bbA^1}(L_{0,n})$ is isomorphic to the direct sum of $(n+1)v' + (v-v')$ copies of $U_\loc^{\bbA^1}(k)$. Recall from \cite[Thm.~8.28]{book} that we we have an isomorphism $\Hom_{\Mot_\loc^{\bbA^1}(k)}(U_\loc^{\bbA^1}(k),U_\loc^{\bbA^1}(k))\simeq K_0(k)\simeq \bbZ$. Under this identification, the inclusion $L_{0,n} \subset L_{0, n+1}$ corresponds to the matrix morphism~(see \cite[\S5]{Munster}):
\begin{equation}\label{eq:matrix1}
\begin{pmatrix} \id & 0 \\ 0 & I_Q^t\end{pmatrix} \colon \bigoplus_{i=1}^{(n+1)v' + (v-v')} U_\loc^{\bbA^1}(k) \too \bigoplus_{i=1}^{(n+1)v' + v} U_\loc^{\bbA^1}(k)\,.
\end{equation}
In the same vein, the homomorphism $\phi\colon L_{0,n} \to L_{0,n+1}$, which increases the degree of the filtration by $1$, corresponds to the matrix morphism 
\begin{equation}\label{eq:matrix2}
\binom{0}{\id} \colon \bigoplus_{i=1}^{nv'+v} U_\loc^{\bbA^1}(k) \too \bigoplus_{i=1}^{(n+1)v' + v} U_\loc^{\bbA^1}(k)\,.
\end{equation}
Since the functor $U_\loc^{\bbA^1}$ sends sequential (homotopy) colimits to sequential homotopy colimits, we hence obtain the following sequential homotopy colimit diagram
\begin{equation}\label{eq:diagram-long}
\xymatrix{
U_\loc^{\bbA^1}(L_{0,0}) \ar[r]^-{\eqref{eq:matrix1}} \ar[d]_-{\eqref{eq:matrix1}\text{-}\eqref{eq:matrix2}} & U_\loc^{\bbA^1}(L_{0,1}) \ar[d]_-{\eqref{eq:matrix1}\text{-}\eqref{eq:matrix2}}  \ar[r]^-{\eqref{eq:matrix1}} & \cdots \ar[r] & U_\loc^{\bbA^1}(L_0) \ar[d]^-{\id- U_\loc^{\bbA^1}({}_\phi L_0)} \\
U_\loc^{\bbA^1}(L_{0,1}) \ar[r]_-{\eqref{eq:matrix1}} & U_\loc^{\bbA^1}(L_{0,2}) \ar[r]_-{\eqref{eq:matrix1}} & \cdots \ar[r] & U_\loc^{\bbA^1}(L_0)\,.
}
\end{equation}
Simple matrix manipulations show that the homotopy cofibers of the vertical morphisms of the diagram \eqref{eq:diagram-long} are all equal to the homotopy cofiber of the morphism $\binom{0}{\id} - I_Q^t\colon \bigoplus^{v-v'}_{i=1} U_\loc^{\bbA^1}(k)\to \bigoplus^v_{i=1} U_\loc^{\bbA^1}(k)$. This allows us then to conclude that distinguished triangle \eqref{eq:triangle-last} yields the following distinguished triangle
$$ \oplus^{v-v'}_{i=1} U_\loc^{\bbA^1}(k) \stackrel{\binom{0}{\id} - I_Q^t}{\too} \oplus^v_{i=1} U_\loc^{\bbA^1}(k) \too U_\loc^{\bbA^1}(L_Q) \stackrel{\partial}{\too} \oplus^{v-v'}_{i=1} \Sigma U_\loc^{\bbA^1}(k)\,.$$
Consequently, the proof is finished.

\section{Proof of Proposition \ref{prop:very-last}}
By construction, the triangulated category $\Mot^{\bbA^1}_\loc(k)$ comes equipped with an action of the homotopy category of spectra (see \cite[\S A.3]{book}):
\begin{eqnarray*}
\Spt \times \Mot^{\bbA^1}_\loc(k) \too \Mot^{\bbA^1}_\loc(k) && (S,N\!\!M) \mapsto S\otimes N\!\!M\,.
\end{eqnarray*}
Consider the distinguished triangle of spectra $\bbS \stackrel{n\cdot -}{\to} \bbS \to \bbS/n \to \Sigma \bbS$, where $\bbS$ stands for the sphere spectrum. Since $\bbS/n \otimes U^{\bbA^1}_\loc(\cA)$ identifies with the mod-$n$ Moore object of $U^{\bbA^1}_\loc(\cA)$ and we have an isomorphism $\bbS/n\simeq \bbS/l_1^{\nu_1} \oplus \cdots \oplus \bbS/l_r^{\nu^r}$ in $\Spt$, we then conclude from Example \ref{ex:Leavitt} (with $E=U^{\bbA^1}_\loc$) that
$$ U^{\bbA^1}_\loc(\cA\otimes L_n) \simeq U^{\bbA^1}_\loc(\cA\otimes L_{l_1^{\nu_1}})\oplus \cdots \oplus U^{\bbA^1}_\loc(\cA\otimes L_{l_r^{\nu_r}})\,.$$
The proof follows now automatically from the fact that $U_\loc^{\bbA^1}$ is the initial $\bbA^1$-homotopy invariant; see \cite[Thm.~8.25]{book}.

\medbreak\noindent\textbf{Acknowledgments:} The author is very grateful to Guillermo Corti\~nas for sharing his views on Leavitt path algebras and their algebraic $K$-theory, for useful comments on a previous version of this note, and for mentioning the reference \cite{AK}.

\end{document}

\end{proof}